\documentclass[oneside, 16pt]{amsart}
\usepackage[utf8]{inputenc}


\usepackage{amssymb}
\usepackage{mathtools}
\usepackage{ mathrsfs }
\usepackage{tikz-cd}

\usepackage[hidelinks]{hyperref}
\usepackage{enumitem}


\usepackage{natbib}
\setcitestyle{numbers,square}
\bibliographystyle{dinat}


\newcommand{\BZ}{\mathbb{Z}}
\newcommand{\BQ}{\mathbb{Q}}
\newcommand{\BN}{\mathbb{N}}
\newcommand{\BR}{\mathbb{R}}
\newcommand{\BC}{\mathbb{C}}
\newcommand{\BF}[1]{\mathbb{F}_{#1}}
\newcommand{\BG}[0]{\mathbb{G}}
\newcommand{\BA}[0]{\mathbb{A}}

\newcommand{\BT}[0]{\mathbb{T}}

\newcommand{\Qbarl}{\overline{\BQ}_{\ell}}

\newcommand{\CF}[0]{\mathcal{F}}
\newcommand{\CM}[0]{\mathcal{M}}

\newcommand{\SH}{\mathscr{H}}
\newcommand{\SL}[0]{\mathscr{L}}
\newcommand{\SF}[0]{\mathscr{F}}

\newcommand{\Hyp}[4]{\textnormal{Hyp}(#1, #2; #3; #4)}

\newcommand{\Spec}[1]{\textnormal{Spec}(#1)}

\newtheorem{theorem}{Theorem}[section]
\newtheorem*{theorem*}{Theorem}
\newtheorem{corollary}[theorem]{Corollary}
\newtheorem{proposition}[theorem]{Proposition}
\newtheorem{lemma}[theorem]{Lemma}
\theoremstyle{definition}
\newtheorem{definition}[theorem]{Definition}
\newtheorem{remark}[theorem]{Remark}







\title{Hypergeometric sheaves with tannakian monodromy group $G_2$}
\author[]{Beat Zurbuchen}

\begin{document}
	\begin{abstract}
		\hspace{-6.5pt} Based on a suggestion by Katz, we determine the Tannakian monodromy group of certain $\ell$-adic hypergeometric sheaves to be the exceptional group $G_2$. Using the smoothing properties of the Fourier transform over the integers, known as uniformity theorems, we prove that the fourth moment is constant on an open locus of the family of hypergeometric sheaves. In our example, this implies a comparison theorem for the Tannakian monodromy groups which determines these groups if the characteristic is large.
	\end{abstract}
	\maketitle
	\tableofcontents
	
	\section{Introduction} The equidistribution of Frobenius conjugacy classes has been a cornerstone of number theory at least since Dirichlet's theorem on primes in arithmetic progressions. In a geometric context, this study was successfully initiated by Deligne in \citep{DeligneWeilII}. The central object of this theory is a compact Lie group, the so-called monodromy group, in which Frobenius conjugacy classes acting on a local system on a variety over a finite field naturally occur and then equidistribute. This provides a formalism to derive concrete equidistribution results in large generality. These results can be applied to local systems whose Frobenius traces parametrize a family of exponential sums. Such an application implies equidistribution results for the family of exponential sums.
	
	These monodromy groups were determined for a large class of $\ell$-adic local systems, among them the so-called hypergeometric sheaves (see \citep[Ch.~8]{KatzESDE}), by Katz. For example, consider a finite field $k$ of odd characteristic $p$ and a prime $\ell \neq p$. Let $\psi$ be a non-trivial $\ell$-adic additive character of $k$. Let $\lambda_2$ denote the unique non-trivial $\ell$-adic multiplicative character of order 2 of $k$ and denote the associated Gauss sums by
	\[
	A_{\psi, k} := -\sum_{x \in k^*}\lambda_2(x)\psi(x).
	\]  
	For each $a \in k^*$ and each $\ell$-adic multiplicative character $\chi$ of $k$ define
	\[
	\text{Hyp}(a, \chi) := A_{\psi, k}^{-7}\sum_{\substack{x_1\cdots x_7 = x_8a\\ x_1, \ldots, x_8 \in k^*}} \psi(x_1 + \ldots + x_7 - x_8)\chi\big(x_4x_5(x_6x_7)^{-1}\big)\lambda_2(x_8).
	\]
	Define the complex of constructible $\ell$-adic sheaves on $\BG_{m, k}$ (see \citep[{}8.2.2]{KatzESDE} for the notation)
	\[
	\CF(\chi, \psi, k) := (A_{\psi, k}^{-7})^{\text{deg}}\otimes\Hyp{!}{\psi}{1, 1, 1, \chi, \chi, \overline{\chi}, \overline{\chi}}{\lambda_2}
	\]
	where $ (A_{\psi, k}^{-7})^{\text{deg}}$ denotes the pullback along the structure map $\BG_{m, k} \rightarrow \Spec{k}$ of the $\ell$-adic character which maps $\text{Fr}_k \mapsto A_{\psi, k}^{-7}$. This complex has the property
	\[
	\text{Tr}(\text{Fr}_k|\CF(\chi, \psi, k)_{{a}}) = \text{Hyp}(a, \chi)
	\]
	for all $a \in k^*$ and all multiplicative characters $\chi.$ Relations such as this, which are consequences of the Lefschetz fixed point formula for constructible $\ell$-adic sheaves, are the primary reason for the appearance of cohomological methods in the theory of exponential sums.
	
	Fix a multiplicative character $\chi$ of $k$ and consider the algebraic group $G_2$ over $\Qbarl$. Fix an isomorphism $\tau\colon \Qbarl \cong \BC$. This isomorphism allows us to base change $G_2$ to $\BC$ to obtain a complex algebraic group $G_{2, \BC}$. Pick a maximal compact subgroup $UG_2 \subset G_{2, \BC}(\BC)$ and let $UG_2^\natural$ be the space of conjugacy classes in $UG_2$. Following \citep[Thm.~8.1]{KatzG2}, there are semisimple conjugacy classes $\theta_{a, \chi} \in UG_2^\natural$ such that the trace of $\theta_{a,\chi}$ acting on the unique irreducible seven-dimensional representation of $UG_2$ is given by
	\[
	\text{Tr}(\theta_{a, \chi}) = \tau(\text{Hyp}(a, \chi)).
	\]
	Denote by $k_n$ a finite extension of $k$ of degree $n \geq 1$ and for a multiplicative character $\chi$ on $k$ define $\chi_n(x) := \chi(\text{Nm}_{k_n/k}(x))$ for all $x \in k_n^*$. When $p \geq 17,$ Deligne's equidistribution theorem \citep[Thm.~7.11.1]{KatzESDE} applied to the determination of the monodromy group of $\CF(\chi, \psi, k)$ in \citep[Thm.~8.1]{KatzG2} implies
	\[
	\lim_{n \rightarrow \infty} \frac{1}{|k_n^*|}\sum_{a \in k_n^*} f(\theta_{a, \chi_n}) = \int_{UG_2} f(g)dg
	\]
	for any continuous, central function $f \in C(UG_2)$ where $dg$ denotes the probability Haar measure on $UG_2.$ We say that the set $\{\theta_{a, \chi} : a \in k_n^*\}$, where $\chi$ remains fixed, equidistributes in $UG_2^\natural$ for the pushforward of the probability Haar measure from $UG_2$ as $n \rightarrow \infty$.
	
	The equidistribution result by Deligne is not able to describe the distribution of the set of conjugacy classes $\{\theta_{a, \chi}: \chi \in \widehat{k_n^*}\}$ for fixed $a$ and variable $\chi$ because the multiplicative characters are not representable by a variety. In \citep[Ch.~4]{KatzConvEqui}, a subcategory of the category of $\ell$-adic perverse sheaves on $\BG_{m, k}$ is equipped with the structure of a neutral $\Qbarl$-linear Tannakian category, which associates an arithmetic Tannakian monodromy group to each object in this category. By design, this formalism proves and describes the equidistribution of such sets. Katz explains in the book \citep[Ch.~25]{KatzConvEqui} (see also Theorem \ref{THM_Katz}) how to apply this formalism to the above sums and determines the Tannakian monodromy group for most of the perverse sheaves occurring in the family defined by the variable $a$.
	\begin{theorem*}[\hspace*{-0mm}{\citep[Thm. 25.1]{KatzConvEqui}}]\label{THM_KatzWeirdVersion}
		The set of $a \in k^*$ such that the set of conjugacy classes $\{\theta_{a, \chi} : \chi \in \widehat{k_n^*}\}$ equidistributes in $UG_2^\natural$ as $n \rightarrow \infty$ has cardinality $|k^*| + o(|k^*|)$ as  $|k| \rightarrow \infty$.
	\end{theorem*}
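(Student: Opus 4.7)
The plan is to derive the theorem from Katz's Tannakian formalism for perverse sheaves on $\mathbb{G}_{m,k}$ under middle convolution, combined with the identification of the generic Tannakian monodromy in this family provided by Theorem \ref{THM_Katz}. For a fixed $a \in k^*$, I would view the appropriately shifted and normalized hypergeometric complex as a single object $N_a$ in that Tannakian category, whose ``Frobenius conjugacy class at the multiplicative character $\chi$'' in the sense of the Mellin-transform formalism of \cite[Ch.~4]{KatzConvEqui} recovers, under the standard seven-dimensional representation of $G_2$, the class $\theta_{a,\chi}$ from \cite[Thm.~9.1]{KatzG2}.

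Granting this dictionary, Theorem \ref{THM_Katz} supplies a density-one subset $S \subset k^*$ on which the geometric Tannakian monodromy group $G_{\mathrm{geom}}(N_a)$ equals $G_2$. The next step is to upgrade this to equality of the arithmetic Tannakian group $G_{\mathrm{arith}}(N_a)$ with $G_2$: since $G_{\mathrm{arith}}(N_a)$ normalizes $G_{\mathrm{geom}}(N_a) = G_2$ inside $\mathrm{GL}_7$, it must lie in $N_{\mathrm{GL}_7}(G_2) = G_2 \cdot Z(\mathrm{GL}_7)$ (using triviality of $\mathrm{Out}(G_2)$ and Schur's lemma applied to the irreducible seven-dimensional representation), and the purity and weight-zero normalization built in via the Gauss-sum factor $A_{\psi,k}^{-7}$ rules out any nontrivial scalar component, leaving $G_{\mathrm{arith}}(N_a) = G_2$.

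With $G_{\mathrm{arith}}(N_a) = G_{\mathrm{geom}}(N_a) = G_2$ for every $a \in S$, the Tannakian version of Deligne's equidistribution theorem \cite[Ch.~4]{KatzConvEqui} applies to $N_a$ and yields equidistribution of the Frobenius classes indexed by $\chi \in \widehat{k_n^*}$ in $UG_2^\natural$ as $n \to \infty$, giving the claim. The main technical hurdle I expect is the first step: constructing $N_a$ as a bona fide object of the Tannakian category and verifying the perversity and ``Tannakian-niceness'' hypotheses of \cite[Ch.~4]{KatzConvEqui}, so that the Mellin-transform Frobenius classes literally coincide with $\theta_{a,\chi}$; without this identification the subsequent monodromy and purity analysis cannot be directly applied.
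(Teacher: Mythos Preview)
The paper does not give its own proof of this statement; it is quoted as \cite[Thm.~25.1]{KatzConvEqui} and then strengthened in Theorem~\ref{THM_MainResult}. So there is no in-paper argument to compare against, only the cited source.

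Your outline has a genuine gap that makes it circular. You write that ``Theorem~\ref{THM_Katz} supplies a density-one subset $S \subset k^*$ on which $G_{\mathrm{geom}}(N_a) = G_2$,'' but Theorem~\ref{THM_Katz} contains no such density-one claim: part~(2) only restricts the group to $G_2$ or $\mathrm{SL}_2$ for each $a$, and part~(3) asserts merely the \emph{existence} of a single $a$ (in some extension) with monodromy $G_2$. Moreover, the paper explicitly derives part~(3) \emph{from} \cite[Thm.~25.1]{KatzConvEqui}, i.e.\ from the very statement you are trying to prove. So invoking Theorem~\ref{THM_Katz} for the density-one input is circular. The remaining steps of your sketch (the normalizer argument to get $G_{\mathrm{arith}} = G_2$, and Tannakian equidistribution) are fine but also redundant, since Theorem~\ref{THM_Katz}(1) already gives $G_{\mathrm{arith}} = G_{\mathrm{geom}}$ for every $a$.

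What is actually missing from your proposal is the substance of Katz's argument in \cite[Ch.~25]{KatzConvEqui}: one works with the entire family $a \mapsto N_a$ as a single object on $\mathbb{G}_m \times \mathbb{G}_m$, computes the Tannakian monodromy of that \emph{family} to be $G_2$, and then applies a semicontinuity/specialization principle to conclude that the fiberwise monodromy drops below $G_2$ only on a set of $a$ of density zero. That two-variable step is the entire content of the theorem, and it cannot be replaced by the fiberwise statements packaged in Theorem~\ref{THM_Katz}.
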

	Based on a suggestion in \citep[Rmk. 25.8]{KatzConvEqui}, we improve this theorem to the following result.
	\begin{theorem*}[Theorem \ref{THM_MainResult}]
		Suppose the characteristic of $k$ is large enough and let $a \in k^*$. The set of conjugacy classes $\{\theta_{a, \chi} : \chi \in \widehat{k^*_n}\}$ equidistributes in $UG_2^\natural$ as $n \rightarrow \infty$.
	\end{theorem*}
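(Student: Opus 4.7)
The plan is to apply the Tannakian equidistribution theorem underlying \cite[Thm.~25.1]{KatzConvEqui} individually at each $a \in k^*$. It suffices to exhibit, for every such $a$, a perverse sheaf $\mathcal{K}_a$ on $\BG_m$ whose Mellin transform produces the family $\{\text{Hyp}(a, \chi)\}_\chi$, and then to show that the arithmetic (and hence geometric) Tannakian monodromy group of $\mathcal{K}_a$ equals $G_2$. I would first rewrite the sum defining $\text{Hyp}(a, \chi)$ by isolating the monomial $\lambda := x_4 x_5/(x_6 x_7)$ so that $\text{Hyp}(a, \chi) = \sum_{\lambda} \chi(\lambda) g_a(\lambda)$, thereby identifying $\mathcal{K}_a$ explicitly as a multiplicative convolution of classical hypergeometric and Kloosterman-type sheaves together with an elementary twist depending on $a$. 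The structural properties (geometric irreducibility, purity of weight zero, trivial determinant, and orthogonal self-duality) then follow from the construction uniformly in $a$ and collectively imply $G_{\mathrm{geom}}(\mathcal{K}_a) \subseteq G_2$ via the $7$-dimensional standard representation.

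Next, I would classify which subgroup of $G_2$ the reductive group $G_{\mathrm{geom}}(\mathcal{K}_a)$ is. Tannakian-irreducibility of $\mathcal{K}_a$, verified directly from the convolution description, shows it acts irreducibly on $\mathrm{Std}_7$, so it is either $G_2$ itself, the image of $\mathrm{SL}_2$ acting via $\mathrm{Sym}^6$, or one of the finitely many primitive irreducible finite subgroups of $G_2$ such as $\mathrm{PGL}_2(\mathbb{F}_7)$, $\mathrm{PGL}_2(\mathbb{F}_{13})$, and $G_2(\mathbb{F}_2)$. The finite possibilities I would rule out by remarking that the local monodromy of $\mathcal{K}_a$ at $0$ and at $\infty$ is independent of $a$ and was already read off by Katz; in sufficiently large characteristic it contains an element of infinite order (for instance a tame semisimple element whose eigenvalues are roots of unity of large order), which is incompatible with finite image. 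To exclude the $\mathrm{SL}_2$-via-$\mathrm{Sym}^6$ case, I would compute a fourth-moment invariant: $\dim \mathrm{Hom}_{G_{\mathrm{geom}}}(\mathbf{1}, \mathrm{Std}_7^{\otimes 4})$ equals $4$ for $G_2$ but $7$ for $\mathrm{SL}_2$ acting via $\mathrm{Sym}^6$. This dimension is realized as the $H^0$-dimension of the fourfold Tannakian convolution $\mathcal{K}_a^{\star 4}$; computing it via the Euler-Poincaré formula reduces it to the local ramification data of $\mathcal{K}_a$, which is constant in $a$. Combined with Katz's density-$1$ result, which pins down the generic value as $4$, this forces $G_{\mathrm{geom}}(\mathcal{K}_a) = G_2$ for every $a$.

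I expect the main obstacle to be controlling this fourth-moment computation uniformly at the "bad" values of $a$ excluded by the density-$1$ argument: a priori the dimension of the relevant cohomology group could jump at such $a$ and inflate the moment above the $G_2$-value of $4$. Ruling this out requires combining the Euler-Poincaré calculation of the moment with a semicontinuity argument, together with the large-characteristic hypothesis, which prevents tame-wild collisions between the factors of the iterated convolution so that the local ramification data behaves as expected. Once $G_{\mathrm{geom}}(\mathcal{K}_a) = G_2$ is established for every $a$, the arithmetic monodromy is sandwiched between $G_{\mathrm{geom}}(\mathcal{K}_a)$ and $G_2$ and hence is also equal to $G_2$, completing the hypotheses of the Tannakian equidistribution theorem.
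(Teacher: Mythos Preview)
Your overall framework agrees with the paper: construct the perverse sheaf on $\BG_m$ (the paper calls it $\SH_{\psi,k,a}$, following Katz's $N(a,k)$), quote Katz's result that its Tannakian monodromy is either $G_2$ or $\mathrm{SL}_2$ via $\mathrm{Sym}^6$, and separate these two cases by the fourth moment ($4$ versus $7$). The reduction to the dichotomy, and the exclusion of finite groups, is already in Katz and is simply cited (Theorem~\ref{THM_Katz}); you need not redo that.

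The substantive issue is your proposed mechanism for showing that $M_4(\SH_{\psi,k,a})$ is independent of $a$. You write that the fourth moment ``is realized as the $H^0$-dimension of the fourfold Tannakian convolution $\mathcal{K}_a^{\star 4}$; computing it via the Euler--Poincar\'e formula reduces it to the local ramification data of $\mathcal{K}_a$, which is constant in $a$.'' This step does not go through. The Grothendieck--Ogg--Shafarevich formula returns the \emph{Euler characteristic} of the convolution, which in Tannakian terms is $\dim(V^{\otimes 2}\otimes (V^\vee)^{\otimes 2}) = 7^4$ regardless of whether the group is $G_2$ or $\mathrm{SL}_2$; it cannot see the multiplicity of the unit object $\delta_1$, which is what $M_4$ is. Local ramification data of $\mathcal{K}_a$ alone does not recover this multiplicity, and your fallback of semicontinuity points the wrong way: at a putative special value of $a$ the invariant subspace could only be \emph{larger}, which is precisely the $\mathrm{SL}_2$ scenario you need to exclude.

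The paper resolves this by a genuinely different route. It first exploits the hypergeometric scaling law (Lemma~\ref{LEM_ChangeOfPsi}) to trade variation in $a$ for variation in the additive character $\psi$: $M_4(\SH_{\psi_\lambda,k,a}) = M_4(\SH_{\psi,k,\lambda^6 a})$. Setting $a=1$, it then rewrites $M_4(\SH_{\psi,k,1})$ as $\lim_{n\to\infty} f(\psi_n,k_n,1)$ for an explicit exponential sum $f$, and realizes $f(\psi,k,1)$ as the Frobenius trace on $H^\bullet_c(\BA^1_{\overline{k}}, K\otimes\SL_\psi)$ for a single complex $K$ defined over an open of $\mathrm{Spec}(\BZ)$. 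An elementary limit argument (Proposition~\ref{PROP_SumOfPowers}) identifies $M_4$ with the \emph{weight-zero} Euler--Poincar\'e characteristic $\chi_0(H^\bullet_c(\BA^1_{\overline{k}}, K\otimes\SL_\psi))$. Finally, Katz's uniformity theorem for the Fourier transform \cite[p.~92, Cor.~1]{KatzSommeExpon} says that this weighted $\chi_0$ is independent of $\psi$ (and of $k$) once the characteristic is large; combined with Katz's generic determination, this forces $M_4=4$ everywhere. The two key ingredients your proposal is missing are the conversion of the $a$-family into a $\psi$-family and the passage to a \emph{weighted} Euler characteristic of an additive Fourier transform, where uniformity is actually available.
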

	We are not able to use our method to give an effective lower bound on the characteristic of $k$. The reason lies in the application of Theorem \ref{THM_KatzUniformity}. This theorem provides an ineffective non-empty open subset of the spectrum of the integers such that for each finite field $k$ whose spectrum maps into the open subset the fourth moment is constant in the family defined by $a \in k^*$. We do not know how to provide such an open subspace without appealing to Theorem \ref{THM_KatzUniformity}, thus we cannot provide an effective lower bound on the characteristic. For an exposition of uniformity results, such as Theorem \ref{THM_KatzUniformity}, the reader is referred to \citep[Ch.~V]{KiehlWeissPervSheafFourier}.
	
	Furthermore, it is not clear whether the Tannakian monodromy group associated with hypergeometric sheaves remains $G_2$ over fields of small characteristic. For monodromy groups of hypergeometric sums, it is a well-known phenomenon (see \citep[Thm~8.1]{KatzG2} and \citep[Thm.~14.10]{KatzESDE}) that (geometric) monodromy groups become uniform only for fields of large characteristic. For example, it is proven in \citep[Thm.~8.1]{KatzG2} that when the characteristic satisfies $p < 17$ the monodromy group of $\CF(\chi, \psi, k)$ becomes a finite group contained in $G_2$ for some multiplicative characters $\chi$. The Tannakian monodromy group in our example could degenerate to the smaller group $\textnormal{SL}_2$ (see Theorem \ref{THM_Katz}). On the other hand, Katz demonstrated in a small number of cases -- via computer calculations -- that the Tannakian monodromy group is $G_2$ (see \citep[Rem.~25.8]{KatzConvEqui}).
	
	Our method should extend to more general hypergeometric families, such as those constructed in \citep{GabberLoeserTore}. Consider $n > 1$ and a perverse !-hypergeometric sheaf $\SH$ on $\BG_{m, k}^n$ in the sense of \citep[Def.~8.1.2]{GabberLoeserTore}. In favorable cases, a group morphism $\BG_m^n \rightarrow \BG_m$ induces a hypergeometric family by restricting $\SH$ to the fibers. The perverse sheaves studied in this paper are of this form up to negligible composition factors -- that is, factors whose Euler-Poincar\'{e} characteristic vanishes. In future work, we plan to apply this method to determine the Tannakian monodromy groups for other members of such families.
	
	Our proof relies heavily on Katz's determination of the Tannakian monodromy group for a generic member in the family parametrized by $a$. We study the sum $\textnormal{Hyp}(a, \chi)$, where $a \in k^*$. Lemma \ref{LEM_ChangeOfPsi} shows that varying the base point $a$ is equivalent to varying the additive character $\psi$. The key step is to formulate the fourth moment of the Tannakian monodromy group as a weighted Euler-Poincar\'{e} characteristic (see Theorem \ref{THM_MomentEulerPoincare}). We then show that this fourth moment -- and hence the Tannakian monodromy group itself -- is independent of $\psi$, if the characteristic is large enough, by appealing to the uniformity result of Theorem \ref{THM_KatzUniformity}. This implies that the Tannakian monodromy group is independent of the parameter $a$, provided the characteristic of $k$ is large enough. In particular, the Tannakian monodromy group for any parameter $a$ has to agree with the Tannakian monodromy group of a generic member, which is $G_2$.

	\begin{center}
		\textbf{Notation: }
	\end{center}
	\begin{itemize}
		\item $\ell$: a fixed prime.
		\item All sheaves and complexes of $\ell$-adic sheaves on a separated, noetherian scheme $X$ on which $\ell$ is invertible are objects in the category $D_c^b(X, \Qbarl)$ defined in \citep[1.1.2]{DeligneWeilII}. If $X$ is of finite type over $\BZ$, then we say a complex of $\ell$-adic sheaves on $X$ is mixed if its cohomology sheaves are mixed in the sense of \citep[1.2.2]{DeligneWeilII}.
		\item We fix an isomorphism $\Qbarl \cong \BC$ once and for all; we apply this isomorphism implicitly whenever needed. In particular, the notation $\lim_{n \rightarrow \infty}$ denotes a limit of complex numbers.
		\item $k$: a finite field, the characteristic of $k$ is always coprime to $\ell$ and \textit{odd}. We fix an algebraic closure $\overline{k}$.
		\item $k_n$: the unique subfield $k_n \subset \overline{k}$ such that $k_n/k$ has degree $n$.
		\item $\SL_\eta$: if $\eta$ is a character $\eta\colon G(k) \rightarrow \Qbarl^*$ for some finite field $k$ and an algebraic group $G/k$, then $\SL_\eta$ denotes the local system on $G$ obtained from $\eta$ using the Lang torsor construction.
		\item $\chi$: a multiplicative character $\chi\colon k^* \rightarrow \Qbarl^*$; we extend any such character to a character on any finite extension $k_n/k$ by putting $\chi_n(x) := \chi(\text{Nm}_{k_n/k}(x))$ for all $x \in k_n$.
		\item $\lambda_2:$ the unique non-trivial multiplicative character of order 2 on $k$ for any finite field $k$.
		\item $\psi$: an additive character $\psi\colon k^+ \rightarrow \Qbarl^*$; we extend any such character to any finite extension $k_n/k$ by putting $\psi_n(x) := \psi(\text{Tr}_{k_n/k}(x))$ for all $x \in k_n$.
		\item $\text{Hyp}(-, -; -; -)$: a hypergeometric sheaf in the sense of \citep[8.2.2]{KatzESDE}.
		\item $M_{2m}(M)$: if $M$ is an object in a Tannakian category, we define the $2m$-th moment by the formula $M_{2m}(M) := \text{dim}(\text{Hom}(1, M^{\otimes m}\otimes(M^\vee)^{\otimes m}))$, where $\text{Hom}$ denotes the vector space of morphisms in the Tannakian category and $1$ is the tensor unit.
		\item If $M$ is a bounded complex of $\Qbarl[T]$-modules, such that all its cohomology sheaves are finite-dimensional over $\Qbarl$, then we define $\text{Tr}(T|M) := \sum_{i \in \BZ}(-1)^i \text{Tr}(T|H^i(M))$. For example, if $\SF$ is an $\ell$-adic sheaf on a separated $k$-scheme $X$, then $\text{Tr}(\text{Fr}_k|H^\bullet_c(X, \SF)) = \sum_{i \geq 0} (-1)^i\text{Tr}(\text{Fr}_k|H^i_c(X, \SF))$.
	\end{itemize}
	\section{The hypergeometric complexes} We recall certain definitions and results from \citep{KatzConvEqui}. Let $k$ be a finite field, $\psi$ a non-trivial additive character of $k$, and $a \in k^*$. Denote by $\SH_{\psi, k, a}$ the $\ell$-adic constructible complex on $\BG_{m, k}$ which is designated $N(a, k)$ in the book \citep[Ch.~27, p.~165]{KatzConvEqui}. We recall the construction of $\SH_{\psi, k, a}$. Define the $\ell$-adic complexes on $\BG_{m, k}\times \BG_{m, k}$
	\begin{align*}
		\CM_1 &:= [(z, t) \mapsto a/(zt^2)]^*\text{Hyp}(!, \psi; 1, 1, 1; \lambda_2)\\ \CM_2 &:= [(z, t) \mapsto z]^*\text{Hyp}(!, \psi; 1, 1; \emptyset) \\ \CM_3 &:= [(z, t) \mapsto zt]^*\text{Hyp}(!, \psi; 1, 1; \emptyset)
	\end{align*}
	where $1$ denotes the trivial multiplicative character of $k$. Consider $$N_0(a, k) := (A_{\psi, k}^{-7})^{\text{deg}}\otimes R\pi_{2!}(\CM_1\otimes\CM_2\otimes\CM_3)[2]$$ where $\pi_2(z, t) := t$. It is shown in \citep[Lem.~27.1~(1)]{KatzConvEqui} that this complex is perverse. Note that $N_0(a, k)$ is mixed of weight $\leq 0$ by \citep[Lem.~27.2]{KatzConvEqui} and \citep[Var.~6.2.3]{DeligneWeilII}. We define the perverse sheaf $\SH_{\psi, k, a}$ to be the graded piece of the weight filtration of $N_0(a, k)$ which is pure of weight zero. Note that $\SH_{\psi, k, a}$ is a quotient of $N_0(a, k)$ because $N_0(a, k)$ is of weight $\leq 0$.
	\begin{remark}
		The complex $N_0(a, k)$ is constructed precisely so that 
		\[
		{H}^{\bullet}_c(\BG_{m, \overline{k}}, N_0(a, k)\otimes\mathscr{L}_{\chi}) \cong \CF(\chi, \psi_n, k_n)_a
		\]
		for all multiplicative characters $\chi$ of $k_n$. 
	\end{remark}
	
	\begin{definition}[{\citep[p.~21]{KatzConvEqui}}]
		Let $j\colon \BG_{m, \overline{k}} \rightarrow \mathbb{P}^1_{\overline{k}}$ be the inclusion. A multiplicative character $\chi$ of $k_n$ is called \textit{good} (or \textit{not bad}) for a perverse sheaf $M$ on $\BG_{m, \overline{k}}$ if the natural morphism
		\[
		Rj_!(M\otimes\SL_\chi) \rightarrow Rj_*(M\otimes\SL_\chi)
		\]
		is an isomorphism. 
	\end{definition}
	\begin{lemma}\label{LEM_GoodChar}
		Let $M$ be a perverse sheaf on $\BG_{m, k}$ which is pure of weight $w \in \BZ$ and $\chi$ a good character for $M$. The cohomology groups
		\[
		H^\bullet_c(\BG_{m, \overline{k}}, M\otimes\SL_\chi) 
		\]
		are concentrated in degree zero and pure of weight $w$.
	\end{lemma}
	\begin{proof}
		The Leray spectral sequence implies that the map
		\[
		{H}_c^\bullet(\BG_{m, \overline{k}}, M\otimes \SL_\chi) \rightarrow {H}^\bullet(\BG_{m, \overline{k}}, M\otimes \SL_\chi)
		\]
		is an isomorphism. Since the complex $M\otimes \SL_\chi$ is perverse, Artin's vanishing theorem \citep[Thm.~III.6.1]{KiehlWeissPervSheafFourier} says that the cohomology groups on the left are concentrated in non-negative degrees and the cohomology groups on the right are concentrated in non-positive degrees since $\BG_{m, k}$ is affine. Thus they are concentrated in degree zero. Note that \citep[Var.~6.2.3]{DeligneWeilII} says that the left side is of weight $\leq w$ and the right side is of weight $\geq w$. Thus the cohomology group is pure of weight $w$.
	\end{proof}
	\begin{remark}
		The vanishing is recorded in \citep[p.~21]{KatzConvEqui}. The purity of the cohomology group follows from  \citep[Thm.~4.1~(3bis)]{KatzConvEqui} applied to $N = M\otimes\SL_\chi$. In fact, Theorem 4.1 in in loc. cit. states states that a character is good for a perverse sheaf if and only if the cohomological Mellin coefficient is pure.
	\end{remark}
	\begin{theorem}\label{THM_KatzHypProp}
		The complex $\SH_{\psi, k, a}$ is pure of weight zero, perverse, irreducible, has no bad characters, and there is a Frobenius-equivariant isomorphism
		\[
		{H}^{\bullet}_c(\BG_{m, \overline{k}}, \SH_{\psi, k, a}\otimes\mathscr{L}_{\chi}) \cong \CF(\chi, \psi_n, k_n)_a
		\]
		for each multiplicative character $\chi \neq \lambda_2$ of $k_n$. 
	\end{theorem}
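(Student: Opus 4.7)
The strategy is to unpack Katz's definition of $\SH_{\psi,k,a} = N(a,k)$ from \cite[Ch.~27]{KatzConvEqui} into an explicit geometric form and to read the four claims off this form. I would present $\SH_{\psi,k,a}$ as a pushforward along a smooth map $\pi\colon V \to \BG_{m,k}$ of an Artin--Schreier--Kummer local system pure of weight zero: the substitution $x_8 = x_1\cdots x_7/a$ in the defining formula for $\text{Hyp}(a,\chi)$, followed by the identification of the Mellin variable $t = x_4x_5/(x_6x_7)$, realises $\SH_{\psi,k,a}$ as (up to the normalising Gauss-sum factor $A_{\psi,k}^{-7}$ and a shift) the pushforward along the projection to the $t$-line of $\SL_{\psi(x_1+\cdots+x_7-x_1\cdots x_7/a)}\otimes\SL_{\chi_2(x_1\cdots x_7/a)}$. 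Applying the Grothendieck--Lefschetz trace formula to this presentation yields
\[
\sum_i (-1)^i \text{Tr}\bigl(\text{Fr}_{k_n}\bigm|H^i_c(\BG_{m,\overline{k}}, \SH_{\psi,k,a}\otimes\SL_\chi)\bigr) = \text{Hyp}(a,\chi_n)
\]
for every finite extension $k_n/k$ and every multiplicative character $\chi$.

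Purity of weight zero follows from Deligne's main theorem in \cite{DeligneWeilII} applied to this pushforward of a pure weight-zero Artin--Schreier--Kummer sheaf. Perversity and irreducibility are then standard consequences of Katz's theory of hypergeometric sheaves \cite[Ch.~8]{KatzESDE} once $\SH_{\psi,k,a}$ is put in the above geometric form. For the "no bad characters" claim I have to show that the local monodromy of $\SH_{\psi,k,a}$ at both $0$ and $\infty$ of the Mellin variable $t$ admits no tame subrepresentation; any tame constituent isomorphic to $\SL_\eta$ in the local monodromy would make $\eta^{-1}$ a bad character. This is a slope computation: the seven Artin--Schreier factors are expected to produce positive slopes as $t\to 0$ and $t\to\infty$, and the symmetry $t\mapsto 1/t$ coming from the variable change $(x_4,x_5,x_6,x_7)\mapsto(x_6^{-1},x_7^{-1},x_4^{-1},x_5^{-1})$ reduces the analysis at $\infty$ to that at $0$.

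Granted "no bad characters", Artin's vanishing theorem (as recalled in the text above) concentrates $H^\bullet_c(\BG_{m,\overline{k}}, \SH_{\psi,k,a}\otimes\SL_\chi)$ in degree zero for every $\chi$. The trace identity from the first paragraph, valid on every finite extension $k_n$, pins down the Frobenius representation on $H^0_c$. For $\chi\neq\chi_2$ the complex $\CF(\chi,\psi_n,k_n)_a$ is similarly concentrated in degree zero, realises the same trace function by construction, and is pure of weight zero; purity and semisimplicity upgrade the coincidence of trace functions to a Frobenius-equivariant isomorphism. The restriction $\chi\neq\chi_2$ is unavoidable because the hypergeometric parameters $(1,1,1,\chi_2,\chi_2,\chi_2,\chi_2;\chi_2)$ contain a repeated character, so $\CF(\chi_2,\psi,k)$ does not match the form of the sum. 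The main obstacle I expect is the slope analysis behind "no bad characters": confirming that the local monodromy is totally wild at both $0$ and $\infty$ requires a careful control of the Euler characteristics of the fibres of $\pi$, and this is the step where the combinatorial structure of the specific parameters $(1,1,1,\chi,\chi,\overline{\chi},\overline{\chi};\chi_2)$ enters decisively.
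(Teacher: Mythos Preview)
The paper's own proof is four lines of citations: perversity and purity hold ``by construction'' in \cite[Ch.~27]{KatzConvEqui}, absence of bad characters is \cite[Lem.~27.5]{KatzConvEqui}, irreducibility is \cite[Lem.~27.11]{KatzConvEqui}, and the cohomology identification is \cite[Lem.~27.4]{KatzConvEqui}. The theorem is a summary of results already established by Katz, and the paper does not attempt an independent argument. Your proposal does, and that is a genuinely different and more ambitious route; but in its present form it has two real gaps.

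First, the purity step is wrong as stated. Deligne's main theorem in \cite{DeligneWeilII} says only that $R\pi_!$ sends mixed of weight $\le w$ to mixed of weight $\le w$; it does not send pure to pure unless $\pi$ is proper, and your $\pi$ is an affine map. The usual route to purity for an object like $N(a,k)$ runs the other way: one first shows that the perverse sheaf is geometrically irreducible, and then purity is forced because the weight filtration of an irreducible mixed perverse sheaf has a single step. That reversal also means you cannot simply wave at ``Katz's theory of hypergeometric sheaves'' for irreducibility, since $N(a,k)$ is not a hypergeometric sheaf in the sense of \cite[Ch.~8]{KatzESDE}; Katz's irreducibility argument \cite[Lem.~27.11]{KatzConvEqui} is specific to this construction and is not a formal consequence of the one-variable theory. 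Second, you yourself flag the slope computation behind ``no bad characters'' as the main obstacle and do not carry it out; note also that the substitution you write down for the symmetry $t\mapsto 1/t$ actually fixes $t$ rather than inverting it, so even that reduction step needs to be revisited. Your trace-matching argument for the final isomorphism is fine once purity and concentration in degree zero are in place.
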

	\begin{proof}
		This complex is perverse and pure of weight zero by construction. It has no bad characters by \citep[Lem. 27.5]{KatzConvEqui} and is irreducible by \citep[Lem.~27.11]{KatzConvEqui}. The cohomology groups are determined in \citep[Lem.~27.4]{KatzConvEqui}.
	\end{proof}
	\begin{remark}
		The character $\lambda_2$ does not occur in the above statement because the graded pieces of the weight filtration of $N_0(a, k)$ which are not pure of weight zero are geometrically isomorphic to $\SL_{\lambda_2}$ by the argument in \citep[Lem.~27.3]{KatzConvEqui}. Note that the character $\lambda_2$ is self-dual, so the quotient map $N_0(a, k) \rightarrow \SH_{\psi, k, a}$ induces an isomorphism
		\[
		H^\bullet_c(\BG_{m, \overline{k}}, N_0(a, k)\otimes\SL_\chi) \rightarrow H^\bullet_c(\BG_{m, \overline{k}}, \SH_{\psi, k, a}\otimes\SL_\chi) 
		\]
		for a multiplicative character $\chi$ of $k_n$ if and only if $\chi \neq \lambda_2$. 
	\end{remark}
	The formalism of \citep[Ch. 4]{KatzConvEqui} (see also \citep[Sec.~4.2]{FresanNickWork}) equips the category of perverse sheaves on $\BG_{m, k}$ which have no perverse subquotients with vanishing Euler-Poincar\'{e}  characteristic when pulled back to $\overline{k}$ with the structure of a neutral Tannakian category. The Tannakian formalism associates to any object $M$ in this category an algebraic group over $\Qbarl$, which is defined as the automorphism group of a fiber functor of the Tannakian category generated by $M$. This algebraic group is called the \textit{arithmetic Tannakian monodromy group} of $M$. The perverse sheaves on $\BG_{m, \overline{k}}$ which have no perverse subquotients with vanishing Euler-Poincar\'{e}  characteristic can also be equipped with the structure of a neutral Tannakian category (see \citep[Ch.~2]{KatzConvEqui}). This associates to each object $M$ in this category the \textit{geometric Tannakian monodromy group}, which is the automorphism group of a fiber functor of the Tannakian category generated by $M$. If the perverse sheaf on $\BG_{m, k}$ is geometrically semisimple, the geometric Tannakian monodromy group is a normal subgroup of the arithmetic Tannakian monodromy group by \citep[Thm.~6.1]{KatzConvEqui}. When the inclusion of the geometric Tannakian monodromy group into the arithmetic Tannakian monodromy group is an equality, we will refer to the common group as the \textit{Tannakian monodromy group}.
	
	As stated in the introduction, Katz determines the Tannakian monodromy group of a generic member of the hypergeometric family $\SH_{\psi, k, a}$, in the family parametrized by $a$, in \citep[Ch.~25]{KatzConvEqui}. Moreover, it is shown that the corresponding monodromy group is either $\textnormal{SL}_2$ or $G_2.$ These facts are summarized in the following theorem.
	\begin{theorem}\label{THM_Katz} Let $\psi, k, a$ be as above. 
		\begin{enumerate}
			\item The geometric and the arithmetic Tannakian monodromy group of $\SH_{\psi, k, a}$ agree. The Tannakian monodromy group of this complex is a subgroup of $\textnormal{GL}_7.$
			\item The Tannakian monodromy group of $\SH_{\psi, k, a}$ in $\textnormal{GL}_7$ is conjugate to either the image of the unique irreducible 7-dimensional representation of $G_2$  or the image of the $\textnormal{SL}_2$-representation $\textnormal{Sym}^6(\textnormal{std}_2)$ where $\textnormal{std}_2$ denotes the representation defined by the inclusion $\textnormal{SL}_2 \rightarrow \textnormal{GL}_2$.
			\item There exists $n \geq 1$ and $b \in k_n^*$ such that the monodromy group of $\SH_{\psi_n, k_n, b}$ is $G_2$. 
			\item The fourth moment satisfies $M_4(\SH_{\psi, k, a}) = 4$ if and only if the Tannakian monodromy group of $\SH_{\psi, k, a}$ is $G_2.$
		\end{enumerate}
	\end{theorem}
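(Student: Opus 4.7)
All four statements are essentially distilled from Katz's treatment in \cite[Ch.~25]{KatzConvEqui} combined with the general theory of hypergeometric sheaves from \cite{KatzESDE}. Since $\SH_{\psi, k, a}$ is the perverse extension of a rank-$7$ hypergeometric local system, the standard fiber functor realizes the Tannakian monodromy group $G_{\text{arith}}$ as a subgroup of $\GL_7$, giving the inclusion in part (1). For the equality $G_{\text{arith}} = G_{\text{geom}}$ I would use that $\SH_{\psi, k, a}$ is pure of weight zero (Theorem~\ref{THM_KatzHypProp}) together with a determinant calculation showing that the geometric determinant is of finite order, so any Frobenius lift must be forced into $G_{\text{geom}}$ by the usual argument of \cite[Ch.~6]{KatzConvEqui}.

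\textbf{Parts (2)--(3).} For (2), I would first read off the local monodromies of the underlying rank-$7$ local system from the hypergeometric data: at $0$ the monodromy is tame with Jordan blocks of sizes $(3,2,2)$ and eigenvalues $(1,\chi,\overline{\chi})$ dictated by the multiplicities of the upstairs characters, while at $\infty$ there is one tame eigenvalue $\chi_2$ and a wild piece of rank six with Swan conductor one. Irreducibility of the sheaf plus these local invariants put $G_{\text{geom}}$ into a short list of irreducible subgroups of $\GL_7$ via Katz's classification of hypergeometric monodromy groups with a tame pseudo-reflection at infinity (see \cite[Thm.~11.1]{KatzESDE} and the rigidity results of \cite{KatzConvEqui}). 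The only surviving candidates acting irreducibly on $\BC^7$ are $G_2$ in its $7$-dimensional irrep and $\SL_2$ acting through $\Sym^6(\text{std}_2)$. Part (3) is a generic-fiber computation in \cite[Ch.~25]{KatzConvEqui}: it suffices to exhibit a single pair $(k_n, a)$ for which an explicit moment calculation or a rigidity argument rules out the $\SL_2$-case, and by part (4) it is enough to find one point at which $M_4 = 4$.

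\textbf{Part (4).} This is the representation-theoretic separation of the two candidates from (2). For $G_2$ acting on $V_7$, the standard decomposition
\[
V_7 \otimes V_7 \;=\; \mathbf{1} \oplus V_7 \oplus V_{14} \oplus V_{27}
\]
into four pairwise non-isomorphic irreducibles yields $M_4 = \dim \End_{G_2}(V_7 \otimes V_7) = 4$. For $\SL_2$ acting on $\Sym^6$, Clebsch--Gordan gives $\Sym^6 \otimes \Sym^6 = \bigoplus_{j=0}^{6}\Sym^{2j}$, i.e.\ seven pairwise non-isomorphic summands, so $M_4 = 7$. Hence $M_4 = 4$ characterises $G_2$ among the two possibilities, proving (4) in both directions.

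\textbf{Main obstacle.} The technical crux is part (2): extracting the very tight classification of irreducible subgroups of $\GL_7$ compatible with the prescribed local hypergeometric structure. This rests on the rigidity and middle-convolution machinery of \cite{KatzESDE}, which would be impractical to reprove from scratch. Once that classification is granted, (1) is a standard purity/determinant argument, (3) reduces to a single-point verification via (4), and (4) itself is elementary character theory.
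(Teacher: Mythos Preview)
The paper's own proof is minimal: it cites \cite[Lem.~25.2]{KatzConvEqui} for (1) and (2), \cite[Thm.~25.1]{KatzConvEqui} for (3), and for (4) simply records that $M_4=4$ for $G_2$ and $M_4=7$ for $\textnormal{SL}_2$. Your treatment of (4) is exactly this with the tensor decompositions spelled out, and your pointers for (1) and (3) land in the right chapters.

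There is, however, a genuine confusion in your sketch of (2). The local picture you describe---tame at $0$ with Jordan blocks $(3,2,2)$ and eigenvalues $(1,\chi,\overline{\chi})$, and at $\infty$ one tame eigenvalue $\chi_2$ plus a wild part of Swan conductor one---together with the appeal to \cite[Thm.~11.1]{KatzESDE}, is the data of the \emph{classical} hypergeometric local system $\CF(\chi,\psi,k)$ in the variable $a$; its ordinary geometric monodromy group is what \cite[Thm.~9.1]{KatzG2} computes. The present theorem is about the \emph{convolution-Tannakian} monodromy group of the perverse sheaf $\SH_{\psi,k,a}=N(a,k)$ in the Tannakian category of \cite[Ch.~4]{KatzConvEqui}. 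That object does not depend on $\chi$, it is not ``the perverse extension of a rank-$7$ hypergeometric local system'', and the $7$ in $\textnormal{GL}_7$ is $\dim H^0_c(\BG_{m,\overline{k}},\SH_{\psi,k,a}\otimes\SL_\chi)$, not a generic rank. The reduction to $G_2$ versus $\textnormal{SL}_2$ in \cite[Lem.~25.2]{KatzConvEqui} uses constraints intrinsic to the convolution formalism (self-duality of $N(a,k)$, Frobenius-torus information extracted from its local behaviour at $0$ and $\infty$, and the short list of irreducible subgroups of $\textnormal{GL}_7$), not the rigidity machinery for classical hypergeometrics. As written, your outline would reprove \cite[Thm.~9.1]{KatzG2} rather than \cite[Lem.~25.2]{KatzConvEqui}; to fix it you need to replace the classical local-monodromy input by the Tannakian constraints developed in \cite{KatzConvEqui}.
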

	\begin{proof}
		The points 1. and 2. are proven in \citep[Lem.~25.2]{KatzConvEqui}. The point 3. follows from \citep[Thm.~25.1]{KatzConvEqui}. To prove 4., the moment is $4$ when the Tannakian monodromy group is $G_2$ and the moment is $7$ if the Tannakian monodromy group is $\textnormal{SL}_2$. These moments were computed using the program LiE. 
	\end{proof}
	\section{The fourth moment} In the previous section, we proved that it is sufficient to compute the fourth moment of $\SH_{\psi, k, a}$ to determine its Tannakian monodromy group. The goal of this section is to derive a concrete formula for the fourth moment as a limit using the equidistribution results for Tannakian monodromy groups.
	\begin{definition}\label{DEF_FunctionF}
		Define the Laurent polynomial $$P(x_{i, j}, y_{i, j}) := \prod_{j \in \{1, 2\}} x_{4, j}x_{5, j}y_{6, j}y_{7, j}(y_{4, j}y_{5, j}x_{6, j}x_{7, j})^{-1}$$
		as an element of the ring $$P \in \BZ[(x^{\pm 1}_{i, j})_{(i, j) \in  \{1, \ldots, 8\}\times\{1, 2\}}, (y_{i, j}^{\pm 1})_{(i, j) \in \{1, \ldots, 8\}\times\{1, 2\}}].$$
		Let $k$ be a finite field, $\psi$ a non-trivial additive character of $k$, and $a \in k^*$. Define
		\begin{align*}
			f(\psi, k, a) := |k|^{-15}\sum_{\substack{x_{1, j}\dots x_{7, j} = ax_{8, j} \\ y_{1, j} \dots y_{7, j} = ay_{8, j}\\P(x_{i, j}, y_{i, j}) = 1}} \Bigg( &\psi\Bigg(\sum_{j \in \{1, 2\}} \Bigg(\sum_{i = 1}^7 (x_{i, j} - y_{i, j} )-x_{8, j}+ y_{8, j} \Bigg)\Bigg) \\ & \times \lambda_2\Bigg(\prod_{j \in \{1, 2\}} x_{8, j}y_{8, j}^{-1}\Bigg)\Bigg)
		\end{align*}
		where we sum over all values $x_{i, j} \in k^*$ and $y_{i, j} \in k^*$ which satisfy the condition in the sum.
	\end{definition}
	\begin{theorem}\label{THM_EquiForMom}
		Let $k$ be a finite field, $\psi$ a non-trivial additive character of $k$, and $a \in k^*$. We have
		\[
		M_4(\SH_{\psi, k, a}) = \lim_{n \rightarrow \infty} f(\psi_n, k_n, a).
		\]
	\end{theorem}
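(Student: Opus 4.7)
The strategy is to identify $M_4(\SH_{\psi,k,a})$ with a character-average of $|\textnormal{Hyp}(a,\chi)|^4$ and then to unfold this average, by orthogonality of multiplicative characters, into the diagonal sum defining $f$.

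First, I would derive an asymptotic formula of the shape
\[
M_4(\SH_{\psi,k,a}) \;=\; \lim_{n \to \infty} \frac{1}{|\widehat{k_n^*}|} \sum_{\chi \in \widehat{k_n^*}} \bigl|\textnormal{Tr}\bigl(\textnormal{Fr}_{k_n} \bigm| H^0_c(\BG_{m,\overline{k_n}}, \SH_{\psi,k,a} \otimes \SL_\chi)\bigr)\bigr|^4,
\]
possibly corrected by an explicit power of $|k_n|$. In the Tannakian category of \cite[Ch.~4]{KatzConvEqui}, $M_4$ is the multiplicity of the tensor unit $\delta_1$ in the fourfold middle convolution $\SH_{\psi,k,a}^{\star 2} \star (\SH_{\psi,k,a}^\wedge)^{\star 2}$; this multiplicity is extracted by applying the Lefschetz trace formula to the fiber at $1$ of the convolution and using K\"unneth, producing the displayed character average. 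The no-bad-character hypothesis of Theorem \ref{THM_KatzHypProp} is essential: together with Artin vanishing it concentrates all relevant cohomologies in degree zero, so the limit has the stated form.

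Second, by Theorem \ref{THM_KatzHypProp} the inner trace equals $\textnormal{Hyp}(a,\chi)$ for every $\chi \neq \chi_2$; the single term at $\chi = \chi_2$ is uniformly bounded and hence negligible after normalization by $1/|\widehat{k_n^*}|$. Substituting the definition of $\textnormal{Hyp}$ and expanding $|\textnormal{Hyp}(a,\chi)|^4 = \textnormal{Hyp}(a,\chi)^2\,\overline{\textnormal{Hyp}(a,\chi)}^2$ introduces summation variables $x_{i,j}, y_{i,j}$ for $i \in \{1,\ldots,8\}$ and $j \in \{1,2\}$, subject to the product constraints in the statement. The $\chi$-dependence collects into the single factor $\chi(P(x,y))$, so orthogonality
\[
\frac{1}{|\widehat{k_n^*}|}\sum_{\chi \in \widehat{k_n^*}} \chi(P(x,y)) \;=\; \mathbf{1}_{\{P(x,y) = 1\}}
\]
reduces the summation to the locus $P = 1$. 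The $\psi$-factors combine into the exponential appearing in the definition of $f$, and the $\chi_2$-factors into $\chi_2(\prod_j x_{8,j}/y_{8,j})$, using that $\chi_2 = \chi_2^{-1}$.

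Finally, I would reconcile normalizations: the Gauss-sum identity $|A_{\psi_n,k_n}|^2 = |k_n|$ applied to the four prefactors $A_{\psi_n,k_n}^{-7}$ in $\textnormal{Hyp}$, combined with the $|k_n|$-factors from $1/|\widehat{k_n^*}|$ and from the character-average formula of the first step, recovers in the limit $n \to \infty$ the prefactor $|k_n|^{-15}$ appearing in the definition of $f$, completing the identification $M_4(\SH_{\psi,k,a}) = \lim_n f(\psi_n, k_n, a)$. The main obstacle is the first step, deriving the character-average formula: this is the technical heart of the argument and rests on the Mellin/Plancherel formalism within Katz's Tannakian framework together with careful control of cohomological degrees via the good-character hypothesis. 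The remaining steps are essentially mechanical expansion, orthogonality, and bookkeeping.
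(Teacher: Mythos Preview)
Your plan matches the paper's proof in structure: derive the character-average formula for $M_4$, substitute the explicit hypergeometric sum, then collapse the $\chi$-sum by orthogonality. The paper obtains the first step by citing \cite[Thm.~7.3]{KatzConvEqui} together with \cite[Eqn.~9.2]{KowalskiTannaka} rather than your convolution/K\"unneth sketch, but these are the same mechanism.

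There is one genuine gap in your outline. You argue that the Frobenius trace at $\chi=\chi_2$ is bounded, hence that term can be dropped from the character average. Correct. But after you substitute the explicit formula $\textnormal{Hyp}(a,\chi)$ for the remaining $\chi\neq\chi_2$, you then apply orthogonality over \emph{all} $\chi\in\widehat{k_n^*}$. This requires you to reinsert the term $|\textnormal{Hyp}(a,\chi_2)|^4$ into the explicit sum, and that quantity is \emph{not} the Frobenius trace you bounded: for $\chi=\chi_2$ the character $\chi_2$ occurs both among the ``numerator'' characters $\{1,1,1,\chi,\chi,\overline\chi,\overline\chi\}$ and the ``denominator'' character $\{\chi_2\}$, so $\textnormal{Hyp}(a,\chi_2)$ and $\textnormal{Tr}(\textnormal{Fr}\mid H^0_c)$ differ. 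The paper closes this gap by invoking the cancellation theorem \cite[8.4.7]{KatzESDE} (cf.\ \cite[p.~163]{KatzConvEqui}), which shows that the explicit sum $\textnormal{Hyp}(a,\chi_2)$ is itself negligible in the limit, so the completion to a full character sum is legitimate. You should add this step; without it the passage from $\sum_{\chi\neq\chi_2}$ to $\sum_{\chi}$ in the explicit formula is unjustified.
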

	\begin{proof}
		By Theorem \ref{THM_KatzHypProp}, all characters are good for $\SH_{\psi, k, a}$. Thus \citep[Thm.~7.3]{KatzESDE} and \citep[Eqn. 9.2]{KowalskiTannaka} imply
		\[  
		M_4(\SH_{\psi, k, a}) = \lim_{n \rightarrow \infty} \frac{1}{|k_n|- 1}\sum_{\chi \in \widehat{k_n^*}} |\text{Tr}(\text{Fr}_{k_n}|H^{\bullet}_c(\BG_{m, \overline{k}}, \SH_{\psi, k, a}\otimes\mathscr{L}_{\chi}))|^4.
		\]
		Remark that the terms in this sum are represented by hypergeometric sums
		\begin{align*}
		\text{Tr}(\text{Fr}_{k_n}&|H^{\bullet}_c(\BG_{m, \overline{k}}, \SH_{\psi, k, a}\otimes\mathscr{L}_{\chi})) =\\ &(A_{\psi, k})^{-7n}\sum_{\substack{x_1\cdots x_7 = ax_8\\ x_1, \ldots, x_8 \in k_n^*}} \psi_n(x_1 + \ldots + x_7 - x_8)\chi(x_4x_5(x_6x_7)^{-1})\lambda_2(x_8)
		\end{align*}
		for all multiplicative characters $\chi \neq \lambda_2$ of $k_n$ by Theorem \ref{THM_KatzHypProp} and \citep[{}8.2.7]{KatzESDE}. A priori, we can not simply replace the terms in the above formula for the moment by hypergeometric sums because of the term with $\chi = \lambda_2$. We replace the terms in this sum by hypergeometric sums by first excluding the term with $\chi = \lambda_2$. Afterwards, we replace the terms with hypergeometric sums, and then we complete the sum by adding the hypergeometric sum with $\chi = \lambda_2$.

		The complex $\SH_{\psi, k, a}$ is pure of weight zero. Lemma \ref{LEM_GoodChar} implies that the cohomology groups $H^{\bullet}_c(\BG_{m, \overline{k}}, \SH_{\psi, k, a}\otimes\mathscr{L}_{\lambda_2})$ are pure of weight zero and concentrated in degree zero. This implies that the term
		\[
		|\text{Tr}(\text{Fr}_{k_n}|H^{\bullet}_c(\BG_{m, \overline{k}}, \SH_{\psi, k, a}\otimes\mathscr{L}_{\lambda_2}))|^4 \leq \chi_c(\BG_{m, \overline{k}}, \SH_{\psi, k, a}))^4
		\]
		is bounded as $n \rightarrow \infty$. Hence we can exclude the character $\lambda_2$ from the limit and write
		\begin{align*}
			M_4(\SH_{\psi, k, a}) &= \lim_{n \rightarrow \infty} \frac{1}{|k_n|- 1}\sum_{\chi \neq \lambda_2} |\text{Tr}(\text{Fr}_{k_n}|H^{\bullet}_c(\BG_{m, \overline{k}}, \SH_{\psi, k, a}\otimes\mathscr{L}_{\chi}))|^4.
		\end{align*}
		We apply the expression of the terms as hypergeometric sums to obtain
		\begin{align*}
			M_4(&\SH_{\psi, k, a}) = \lim_{n \rightarrow \infty}\frac{|A_{\psi, k}|^{-28n}}{|k_n|- 1}\sum_{\chi \neq \lambda_2}\sum_{\substack{x_{1, j}\dots x_{7, j} = ax_{8, j} \\ y_{1, j} \dots y_{7, j} = ay_{8, j}}} \Bigg(\chi(P(x_{i,j}, y_{i, j})
			)\\&\times \lambda_2\Bigg(\prod_{j \in \{1, 2\}} x_{8, j}(y_{8, j})^{-1}\Bigg)\psi_n\Bigg(\sum_{j \in \{1, 2\}} \Bigg(\sum_{i = 1}^7 (x_{i, j} - y_{i, j} )-x_{8, j}+ y_{8, j}\Bigg)\Bigg)\Bigg).
		\end{align*}
		It follows from the cancellation theorem \citep[{}8.4.7]{KatzESDE} that we have an exact sequence of perverse sheaves
		\begin{align*}
		0 \rightarrow V\otimes\SL_{\lambda_2}[1] \rightarrow &\text{Hyp}(!, \psi; 1, 1, 1, \lambda_2, \lambda_2, \lambda_2, \lambda_2; \lambda_2)  \\\rightarrow  &\text{Hyp}(!, \psi; 1, 1, 1, \lambda_2, \lambda_2, \lambda_2; \emptyset) \rightarrow 0
		\end{align*}
		where $V := H_c^0(\BG_{m, \overline{k}}, \text{Hyp}(!, \psi; \lambda_2, \lambda_2, \lambda_2, 1, 1, 1; \emptyset))$. Note that $V$ is pure of weight 6 by K\"{u}nneth's formula and $ \text{Hyp}(!, \psi: 1, 1, 1, \lambda_2, \lambda_2, \lambda_2; \emptyset)$ is pure of weight 6 by \citep[Thm.~8.4.2~(4)]{KatzESDE}. By \citep[Thm.~8.4.2~(6)]{KatzESDE}, we get the estimate
		\[
		\Bigg|(A_{\psi, k})^{-7}\sum_{x_1\cdots x_7 = ax_8} \psi(x_1 + \ldots + x_7 - x_8)\lambda_2(x_4x_5(x_6x_7)^{-1})\lambda_2(x_8)\Bigg| \leq 7.
		\] 
		Thus we can complete the above sum to
		\begin{align*}
			M_4(&\SH_{\psi, k, a}) = \lim_{n \rightarrow \infty}\frac{|A_{\psi, k_n}|^{-28}}{|k_n|- 1}\sum_{\chi \in \widehat{k_n^*}}\sum_{\substack{x_{1, j}\dots x_{7, j} = ax_{8, j} \\ y_{1, j} \dots y_{7, j} = ay_{8, j}}} \Bigg(\chi(P(x_{i,j}, y_{i, j})
			)\\&\times \lambda_2\Bigg(\prod_{j \in \{1, 2\}} x_{8, j}(y_{8, j})^{-1}\Bigg)\psi\Bigg(\sum_{j \in \{1, 2\}} \Bigg(\sum_{i = 1}^7 (x_{i, j} - y_{i, j} )-x_{8, j}+ y_{8, j}\Bigg)\Bigg)\Bigg).
		\end{align*}
		The term $\frac{|A_{\psi, k_n}|^{-28}}{|k_n|- 1}$ is asymptotically equivalent to $|k_n|^{-15}$. 
		We change the order of summation and then the orthogonality of characters of ${k_n^*}$ implies
		\[
		M_4(\SH_{\psi, k, a})  = \lim_{n \rightarrow \infty} f(\psi_n, k_n, a).
		\]
		This is the statement of the theorem.
	\end{proof}
	\begin{corollary}\label{COR_IndOfBaseField}
		Let $k$ be a finite field, $\psi$ a non-trivial additive character of $k$, $a \in k^*$, and $n \geq 1.$ Then
		\[
		M_4(\SH_{\psi, k, a}) = M_4(\SH_{\psi_n, k_n, a}).
		\]
	\end{corollary}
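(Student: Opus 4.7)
The plan is to deduce this corollary directly from Theorem \ref{THM_EquiForMom} by observing that the right-hand limit in that theorem is invariant under the operation of replacing the ground field $k$ by the extension $k_n$.

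First, I would apply Theorem \ref{THM_EquiForMom} to the triple $(\psi, k, a)$ to get
\[
M_4(\SH_{\psi, k, a}) = \lim_{m \rightarrow \infty} f(\psi_m, k_m, a),
\]
and to the triple $(\psi_n, k_n, a)$ to get
\[
M_4(\SH_{\psi_n, k_n, a}) = \lim_{m \rightarrow \infty} f((\psi_n)_m, (k_n)_m, a).
\]
Next I would check the bookkeeping: $(k_n)_m = k_{nm}$ is immediate, and the transitivity of trace gives
\[
(\psi_n)_m(x) = \psi(\textnormal{Tr}_{k_n/k}(\textnormal{Tr}_{k_{nm}/k_n}(x))) = \psi(\textnormal{Tr}_{k_{nm}/k}(x)) = \psi_{nm}(x)
\]
for all $x \in k_{nm}$. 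Hence the second limit can be rewritten as
\[
M_4(\SH_{\psi_n, k_n, a}) = \lim_{m \rightarrow \infty} f(\psi_{nm}, k_{nm}, a).
\]

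Finally, I would conclude by noting that $\{f(\psi_{nm}, k_{nm}, a)\}_{m \geq 1}$ is a subsequence of $\{f(\psi_m, k_m, a)\}_{m \geq 1}$; since the latter converges (to $M_4(\SH_{\psi, k, a})$), so does the former, and to the same value. There is no genuine obstacle here: the content of the corollary is entirely absorbed into Theorem \ref{THM_EquiForMom}, and the proof is a pure formality about subsequences of a convergent sequence combined with the compatibility of the extension conventions for $\psi$ and $k$.
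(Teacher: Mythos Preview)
Your proof is correct and is exactly the paper's argument: apply Theorem \ref{THM_EquiForMom} to both sides and observe that the sequence for $k_n$ is a subsequence of the convergent sequence for $k$. Your extra verification that $(\psi_n)_m = \psi_{nm}$ via transitivity of trace is the only point the paper leaves implicit.
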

	\begin{proof}
		Theorem \ref{THM_EquiForMom} implies
		\[
		M_4(\SH_{\psi, k, a}) = \lim_{m \rightarrow \infty} f(\psi_m, k_m, a) = \lim_{m \rightarrow \infty} f(\psi_{mn}, k_{mn}, a) = M_4(\SH_{\psi_n, k_n, a}).
		\]
		This is the statement of the corollary.
	\end{proof}
	The following formula for the change of the additive character lies at the heart of our argument.
	\begin{lemma}\label{LEM_ChangeOfPsi}
		Let $k$ be a finite field, $\psi$ a non-trivial additive character of $k$, and $a, \lambda \in k^*$. Define the additive character $\psi_\lambda(x) := \psi(\lambda x)$ for all $x \in k$. Then
		\[
		M_4(\SH_{\psi_\lambda, k, a}) = M_4(\SH_{\psi, k, \lambda^6a}).
		\]
	\end{lemma}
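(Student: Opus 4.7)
The plan is to reduce the identity to a change of variables in the explicit formula from Theorem \ref{THM_EquiForMom}. First, I would note that for $\lambda \in k^*$ and any finite extension $k_n/k$, since $\lambda$ commutes with $\text{Tr}_{k_n/k}$, we have $(\psi_\lambda)_n(x) = \psi(\text{Tr}_{k_n/k}(\lambda x)) = (\psi_n)_\lambda(x)$ for all $x \in k_n$. Applying Theorem \ref{THM_EquiForMom} to both sides of the lemma, it suffices to establish the pointwise identity
\[
f(\psi_\lambda, k, a) = f(\psi, k, \lambda^6 a)
\]
for every finite field $k$, non-trivial additive character $\psi$ of $k$, and $a, \lambda \in k^*$, and then pass to the limit along the tower $k_n$.

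To prove this identity, I would perform the substitution $x_{i,j} = \lambda u_{i,j}$, $y_{i,j} = \lambda v_{i,j}$ for all $i \in \{1, \dots, 8\}$ and $j \in \{1, 2\}$ in the sum defining $f(\psi, k, \lambda^6 a)$. Since $\lambda \in k^*$, this is a bijection on $(k^*)^{32}$. The constraint $x_{1,j} \cdots x_{7,j} = \lambda^6 a \cdot x_{8,j}$ becomes $\lambda^7 u_{1,j} \cdots u_{7,j} = \lambda^7 a\, u_{8,j}$, i.e.\ $u_{1,j} \cdots u_{7,j} = a u_{8,j}$, and analogously for the $y$'s. The polynomial $P$ is homogeneous of total degree zero in each $j$-block---both numerator and denominator have total degree four---so $P(u_{i,j}, v_{i,j}) = P(x_{i,j}, y_{i,j})$ and the condition $P = 1$ is preserved. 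The quadratic factor contributes $\chi_2(\lambda^2 \lambda^{-2}) = 1$, and the additive character becomes
\[
\psi\left(\lambda \sum_j \left(\sum_{i=1}^7 (u_{i,j} - v_{i,j}) - u_{8,j} + v_{8,j}\right)\right) = \psi_\lambda\left(\sum_j \left(\sum_{i=1}^7 (u_{i,j} - v_{i,j}) - u_{8,j} + v_{8,j}\right)\right).
\]
Since the normalization $|k|^{-15}$ is identical on both sides, the substitution transforms $f(\psi, k, \lambda^6 a)$ into $f(\psi_\lambda, k, a)$ term-by-term.

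I do not expect any genuine obstacle; the argument amounts to bookkeeping. The exponent $6$ arises from the balance between the seven variables $x_{1,j}, \ldots, x_{7,j}$ (contributing $\lambda^7$) and the single variable $x_{8,j}$ (contributing $\lambda$) in each fiber, reflecting the hypergeometric type $(7,1)$. Combining the two steps via Theorem \ref{THM_EquiForMom} then yields
\[
M_4(\SH_{\psi_\lambda, k, a}) = \lim_n f((\psi_n)_\lambda, k_n, a) = \lim_n f(\psi_n, k_n, \lambda^6 a) = M_4(\SH_{\psi, k, \lambda^6 a}),
\]
as desired.
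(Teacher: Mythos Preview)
Your proof is correct. The verification of the change of variables in $f$ is clean, and the observation $(\psi_\lambda)_n = (\psi_n)_\lambda$ is exactly what is needed to push the identity through the tower and apply Theorem~\ref{THM_EquiForMom}.

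The paper's argument is the same in spirit but organised differently: rather than applying Theorem~\ref{THM_EquiForMom} and performing the substitution in the $32$-variable sum $f$ \emph{after} the orthogonality step, the paper works one level earlier, with the expression $\frac{1}{|k_n|-1}\sum_{\chi \neq \chi_2} |\text{Tr}(\text{Fr}_{k_n}|\CF(\chi,\psi_{\lambda,n},k_n)_a)|^4$, and invokes \cite[Lem.~8.7.2]{KatzESDE} to pass from $\CF(\chi,\psi_{\lambda,n},k_n)_a$ to $\CF(\chi,\psi_n,k_n)_{\lambda^6 a}$ term-by-term in $\chi$. That lemma is itself just the change of variables $x_i \mapsto \lambda x_i$ in the single $8$-variable hypergeometric sum, so the two proofs unwind to the same substitution. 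Your route is slightly more self-contained, since it avoids the external reference and makes the bookkeeping for the degree-zero homogeneity of $P$ and the cancellation in the $\chi_2$ factor explicit; the paper's route has the advantage of showing that the identity already holds at the level of each individual Frobenius trace, not just the averaged moment.
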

	\begin{proof}
		Just as in the proof of Theorem \ref{THM_EquiForMom} we have
		\[
		M_4(\SH_{\psi_{\lambda}, k, a}) = \lim_{n \rightarrow \infty} \frac{1}{|k_n|- 1}\sum_{\chi \neq \lambda_2} |\text{Tr}(\text{Fr}_{k_n}|H^{\bullet}_c(\BG_{m, \overline{k}}, \SH_{\psi_{\lambda}, k, a}\otimes\mathscr{L}_{\chi}))|^4.
		\]
		We define $\psi_{\lambda, n} := (\psi_{\lambda})_n$. Using Theorem \ref{THM_KatzHypProp}, we can write the above limit in the form
		\[
		M_4(\SH_{\psi_{\lambda}, k, a}) = \lim_{n \rightarrow \infty} \frac{1}{|k_n|- 1}\sum_{\chi \neq \lambda_2} |\text{Tr}\big(\text{Fr}_{k_n}|\CF(\chi, \psi_{\lambda, n}, k_n)_{{a}}\big)|^4.
		\]
		Note that $A_{\psi_\lambda, k} = \lambda_2(\lambda)A_{\psi, k}$. Moreover, substitution implies the formula
		\[
		\sum_{\substack{x_1\cdots x_7 = ax_8\\ x_1, \ldots, x_8 \in k_n^*}} \psi_{\lambda, n} (x_1 + \ldots + x_7 - x_8)\lambda_2(x_8) = \lambda_2(\lambda)\sum_{\substack{y_1\cdots y_7 = \lambda^6ay_8\\y_1, \ldots, y_8 \in k_n^*}} \psi_n(y_1 +\cdots + y_7 - y_8)\lambda_2(y_8).
		\]
		These formulas for changing the character $\psi$ imply
		\[
		|\text{Tr}\big(\text{Fr}_{k_n}|\CF(\chi, \psi_{\lambda, n}, k_n)_{{a}}\big)|^4 = |\text{Tr}\big(\text{Fr}_{k_n}|\CF(\chi, \psi_n, k_n)_{{\lambda^6 a}}\big)|^4
		\]
		for all multiplicative characters $\chi \neq \lambda_2$ of $k_n^*$ by Theorem \ref{THM_KatzHypProp}.  We use this transformation law in the formula for the fourth moment to get
		\begin{align*}
			M_4(\SH_{\psi_{\lambda}, k, a}) = \lim_{n \rightarrow \infty} \frac{1}{|k_n|- 1}\sum_{\chi \neq \lambda_2} |\text{Tr}\big(\text{Fr}_{k_n}|\CF(\chi, \psi_n, k_n)_{{\lambda^6a}}\big)|^4 = M_4(\SH_{\psi, k, \lambda^6a}).
		\end{align*}
		This is what we wanted to prove.
	\end{proof}
	\begin{remark}
		The equality of moments reflects an isomorphism of sheaves on $\BG_{m, k}^2$. More precisely, this equality follows from a change of characters formula for !-hypergeometric sheaves on $\BG_{m, k}^4$ as in \citep[Def.~8.1.2]{GabberLoeserTore}. For a hypergeometric sheaf on $\BG_{m, k}$, this formula is given by \citep[Lem.~8.7.2]{KatzESDE}.
	\end{remark}
	\section{Weighted Euler-Poincar\'{e} characteristics and the moment} Informally speaking, this section expresses the function $f$ from Definition \ref{DEF_FunctionF} as the trace function of an additive Fourier transform of a sheaf on $\mathbb{A}^1_\BZ$. The equality of the geometric and the arithmetic Tannakian monodromy group implies a formula for the moment in terms of the weight filtration on the stalk of the Fourier transform at 1. This expression, in turn, can be controlled by appealing to the uniformity properties of the Fourier transform. We introduce an ad-hoc notion for a function which is the ``Fourier transform" of a trace function of an $\ell$-adic complex on $\BA^1_{\BZ}$.
	\begin{definition}
		Let $g(\psi, k)$ be a complex-valued function that takes as an input a finite field $k$ and a non-trivial additive character $\psi$ of $k$.  Let $U \subset \text{Spec}(\BZ)$ be a dense open subset. We say that the function $g$ \textit{is representable by a Fourier transform over $U$} if there exists a mixed complex $K$ on $\BA^1_U$ such that 
		\[
		g(\psi, k) = \text{Tr}(\text{Fr}_k|H^{\bullet}_c(\BA^1_{\overline{k}}, K\otimes\SL_{\psi}))
		\]
		for all finite extensions $k/\BF{p}$ with $p \in U$ and all additive characters $\psi$ on $k$. The complex $K$ is said to be a \textit{representing complex for the function $f$.}
	\end{definition}
	\begin{theorem}\label{THM_TraceFormulaForf}
		The function $(\psi, k) \mapsto f(\psi, k, 1)$ is representable by a Fourier transform over $\BZ[1/2\ell]$.
	\end{theorem}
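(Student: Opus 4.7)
The plan is to interpret $f(\psi,k,1)$ as a Frobenius trace on $H^\bullet_c(\BA^1_{\overline{k}}, K \otimes \SL_\psi)$ by constructing a variety whose points index the summation, mapping it to $\BA^1$ via the $\BZ$-linear form appearing inside $\psi$, and pushing forward a Kummer sheaf built from the $\chi_2$-factor. Concretely, working over $U := \text{Spec}(\BZ[1/2\ell])$, I would let $X \subset \BG_{m,U}^{28}$ be the closed subscheme with coordinates $(x_{i,j}, y_{i,j})$, $i \in \{1,\ldots,7\}$, $j \in \{1,2\}$, cut out by the single equation $P(x_{i,j}, y_{i,j}) = 1$, using the abbreviations $x_{8,j} := \prod_{i=1}^{7} x_{i,j}$ and $y_{8,j} := \prod_{i=1}^{7} y_{i,j}$ (so the product constraints in the definition of $f$ are built in). On $X$ I would define the morphism $L\colon X \to \BA^1_U$, $L := \sum_{j \in \{1,2\}}\big(\sum_{i=1}^7 (x_{i,j} - y_{i,j}) - x_{8,j} + y_{8,j}\big)$, and $g\colon X \to \BG_{m,U}$, $g := \prod_{j \in \{1,2\}} x_{8,j}\,y_{8,j}^{-1}$.

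Let $\SL_{\chi_2}$ denote the rank-one Kummer local system on $\BG_{m,U}$ obtained from the étale double cover $[2]\colon \BG_m \to \BG_m$; on every fiber above a finite field of odd characteristic this specializes to the usual $\chi_2$-sheaf. I would then take
\[
K := RL_!(g^* \SL_{\chi_2})(15),
\]
a complex on $\BA^1_U$ which is mixed by Deligne's stability theorem for $Rf_!$ \cite[Thm.~6.1.1]{DeligneWeilII} (the lisse sheaf $g^*\SL_{\chi_2}$ is pure of weight $0$). For any finite field $k$ with $\text{char}(k) \in U$ and any $t \in \BA^1(k)$, proper base change gives
\[
\text{Tr}(\text{Fr}_k|K_t) = |k|^{-15}\sum_{\substack{(x,y) \in X(k) \\ L(x,y) = t}} \chi_2(g(x,y)),
\]
the Tate twist $(15)$ producing exactly the scaling $|k|^{-15}$. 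Applying the Grothendieck-Lefschetz trace formula to $K \otimes \SL_\psi$ on $\BA^1$ then yields
\[
\text{Tr}\big(\text{Fr}_k|H^\bullet_c(\BA^1_{\overline{k}}, K \otimes \SL_\psi)\big) = \sum_{t \in k} \text{Tr}(\text{Fr}_k|K_t)\,\psi(t) = |k|^{-15}\sum_{(x,y) \in X(k)} \chi_2(g(x,y))\,\psi(L(x,y)),
\]
which is exactly $f(\psi,k,1)$, so $K$ is a representing complex.

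There is no conceptual obstacle here; the argument is a direct translation of the exponential sum into a geometric trace function. The only points requiring mild care are (i) spreading $X$, $L$, $g$ and the Kummer sheaf over $U$, which works because all defining polynomials have $\BZ$-coefficients and the squaring isogeny is étale once $2$ is invertible, and (ii) choosing the Tate twist to match the normalization factor $|k|^{-15}$ built into $f$.
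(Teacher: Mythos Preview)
Your proof is correct and follows essentially the same approach as the paper: define the subvariety of a torus over $\text{Spec}(\BZ[1/2\ell])$ cut out by the constraints, push forward the pullback of the quadratic Kummer sheaf along the linear form, and invoke proper base change and the trace formula. The only cosmetic differences are that you eliminate the redundant coordinates $x_{8,j}, y_{8,j}$ (working in $\BG_m^{28}$ rather than $\BG_m^{32}$) and that you insert the Tate twist $(15)$ to match the normalizing factor $|k|^{-15}$, a point on which you are in fact more careful than the paper's own write-up.
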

	\begin{proof} The squaring map \([2] : \mathbb{G}_{m,\mathbb{Z}[1/2\ell]} \to \mathbb{G}_{m,\mathbb{Z}[1/2\ell]}\) is a finite étale Galois cover with Galois group \(\mathbb{Z}/2\mathbb{Z}\). Therefore, the pushforward \([2]_\ast \Qbarl\) decomposes as a direct sum \([2]_\ast \Qbarl \cong \Qbarl \oplus \SL\), where \(\SL\) is a non-trivial rank one local system on \(\mathbb{G}_{m,\mathbb{Z}[1/2\ell]}\). By \citep[Cor.~5.3.9]{LeiFu}, for any closed point \(k\) of \(\operatorname{Spec}(\mathbb{Z}[1/2\ell])\), the local system \(\SL\) restricted to the fiber \(\mathbb{G}_{m,k}\) is isomorphic to the direct factor \(\SL_{\lambda_2}\) in the lisse sheaf \([2]_*(\Qbarl)\). Consider the closed subscheme $Z \subset \BG_{m, \BZ[1/2\ell]}^{32}$ defined by the equations 
		\begin{align*}
			x_{1, j}\dots x_{7, j} = x_{8, j},\  y_{1, j} \dots y_{7, j} = y_{8, j},\  P(x_{i, j}, y_{i, j}) = 1
		\end{align*}
		for all $j \in \{1, 2\}.$ Define the maps $\varphi_1\colon Z \rightarrow \BG_{m, \BZ[1/2\ell]}$ and $\varphi_2\colon Z \rightarrow \BA_{\BZ[1/2\ell]}^1$ by
		\begin{align*}
			\varphi_1(x_{i, j}, y_{i, j}) & = \prod_{j \in \{1, 2\}} x_{8 ,i}y_{8 ,i}^{-1} \\
			\varphi_2(x_{i, j}, y_{i, j}) &= \sum_{j \in \{1, 2\}} \bigg(\sum_{i = 1}^7 (x_{i, j} - y_{i, j} )-x_{8, j}+ y_{8, j}\bigg).
		\end{align*}
		Put $K := R\varphi_{2!}(\varphi_1^*(\SL(-15))).$ Consider a finite extension $k/\BF{p}$ with $p$ coprime to $2\ell$ and a non-trivial additive character $\psi$. Denote by $Z_{\overline{k}}$ the fiber of $Z$ over the geometric point $\overline{k}$ in the spectrum of $\BZ[1/2\ell]$. The projection formula implies that there is a Frobenius-equivariant isomorphism
		\[
		H^\bullet_c(\BA^1_{\overline{k}}, K\otimes\SL_{\psi}) \cong H^\bullet_c(Z_{\overline{k}}, \varphi_1^*(\SL_{\lambda_2})\otimes\varphi_2^*(\SL_{\psi}))(-15).
		\]
		The Lefschetz trace formula and the twist in the definition of $K$ imply 
		\[
		\text{Tr}(\text{Fr}_k|H^\bullet_c(Z_{\overline{k}}, \varphi_1^*(\SL_{\lambda_2})\otimes\varphi_2^*(\SL_{\psi}))(-15)) = f(\psi, k, 1).
		\]
		The previous two equations combined imply that $K$ represents the function from the lemma.   
	\end{proof}
	To get the most out of Theorem \ref{THM_TraceFormulaForf}, we use the following well-known proposition. We delay the proof of this proposition to Section \ref{SEC_Lemma} because it is unrelated to the rest of the article.
	\begin{proposition}\label{PROP_SumOfPowers}
		Consider $\lambda_1, \ldots, \lambda_n \in \BC$ and $\alpha_1, \ldots, \alpha_n \in \BC$ such that the limit $\lim_{N \rightarrow \infty} \sum_{i = 1}^n \alpha_i\lambda_i^N$ exists. Then it is given by 
		\[
		\lim_{N \rightarrow \infty} \sum_{i = 1}^n \alpha_i\lambda_i^N = \sum_{|\lambda_i| = 1} \alpha_i.
		\]
	\end{proposition}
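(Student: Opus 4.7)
The plan is to reduce to the case of pairwise distinct $\lambda_i$ and then use a Lagrange-polynomial isolation trick to extract information about each individual coefficient $\alpha_i$ from the convergence of the full sum. First I would combine terms sharing the same value of $\lambda_i$; this alters neither side of the claimed identity, so without loss of generality the $\lambda_i$ are pairwise distinct. I may also assume all $\lambda_i$ and $\alpha_i$ are nonzero (zero terms drop out trivially).

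Fix an index $i_0$ and set $Q(x) := \prod_{i \neq i_0}(x - \lambda_i) = \sum_j c_j x^j$. Writing $a_N := \sum_i \alpha_i \lambda_i^N$, the finite linear combination $\sum_j c_j\, a_{N+j}$ equals $\sum_i \alpha_i\, Q(\lambda_i)\, \lambda_i^N$, which collapses to $\alpha_{i_0}\, Q(\lambda_{i_0})\, \lambda_{i_0}^N$ since $Q$ vanishes at every $\lambda_i$ with $i \neq i_0$. Because the $c_j$ form a finite set of constants, $a_N \to L$, and $Q(\lambda_{i_0}) \neq 0$ by distinctness, this forces the sequence $\alpha_{i_0}\lambda_{i_0}^N$ to converge in $\BC$ (in fact to $L \cdot Q(1)/Q(\lambda_{i_0})$, though only the existence of the limit is used below).

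From this convergence I would split into cases on $\lambda_{i_0}$. If $|\lambda_{i_0}| > 1$, the sequence $\lambda_{i_0}^N$ is unbounded, so $\alpha_{i_0} = 0$. If $|\lambda_{i_0}| = 1$ and $\lambda_{i_0} \neq 1$, the consecutive difference $\alpha_{i_0}\lambda_{i_0}^{N+1} - \alpha_{i_0}\lambda_{i_0}^N = \alpha_{i_0}\lambda_{i_0}^N(\lambda_{i_0} - 1)$ must tend to zero, and since $|\lambda_{i_0} - 1| > 0$ and $|\lambda_{i_0}^N| = 1$ this again forces $\alpha_{i_0} = 0$. If $|\lambda_{i_0}| < 1$ then $\lambda_{i_0}^N \to 0$, so the $i_0$-term contributes nothing to the limit of $a_N$. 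If $\lambda_{i_0} = 1$ the $i_0$-term contributes exactly $\alpha_{i_0}$ to $a_N$.

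Combining the cases: only indices with $\lambda_i = 1$ contribute to $L$, giving $L = \sum_{\lambda_i = 1}\alpha_i$; and because we have also shown $\alpha_i = 0$ whenever $|\lambda_i| = 1$ with $\lambda_i \neq 1$, this sum agrees with $\sum_{|\lambda_i|=1}\alpha_i$, which is the stated formula. I do not foresee any real obstacle: the only non-routine ingredient is the Lagrange isolation trick, and the rest is elementary asymptotic analysis.
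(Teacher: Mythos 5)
Your argument is correct, and it takes a genuinely different route from the paper. The paper reduces to $|\lambda_i|\le 1$ and then invokes the Kronecker--Weyl theorem via Lemma~\ref{LEM_SumOfPowersPrep} (a continuous function on the closed subgroup of the torus generated by $(\lambda_i)_{|\lambda_i|=1}$ whose orbit values converge must be constant), plus a Vandermonde linear-independence lemma (Lemma~\ref{LEM_SumsOfPowersPrep2}) to rule out $|\lambda_i|>1$. You instead use polynomial isolation: after collapsing equal $\lambda_i$'s, the combination $\sum_j c_j\,a_{N+j}$ with $Q(x)=\prod_{i\neq i_0}(x-\lambda_i)=\sum_j c_jx^j$ picks out the single sequence $\alpha_{i_0}Q(\lambda_{i_0})\lambda_{i_0}^N$, so convergence of $(a_N)$ forces convergence of each $\alpha_{i_0}\lambda_{i_0}^N$; the remaining case analysis on $|\lambda_{i_0}|$ (unboundedness when $|\lambda_{i_0}|>1$, the consecutive-difference trick when $|\lambda_{i_0}|=1$ and $\lambda_{i_0}\neq 1$) is elementary. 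The paper's approach is thematically consistent with the rest of the article (equidistribution phenomena) and externalizes the dynamics to a cited theorem; your approach is entirely self-contained, avoids Kronecker--Weyl, and arguably gives a slightly sharper intermediate conclusion (termwise convergence of $\alpha_i\lambda_i^N$) for free. One small remark: your reduction to distinct $\lambda_i$ replaces the $\alpha_i$ by the grouped coefficients $\beta_z=\sum_{\lambda_i=z}\alpha_i$, and your conclusion is then $\beta_z=0$ for $|z|=1,\ z\neq 1$ and $L=\beta_1$; it is worth stating explicitly that $\sum_{|z|=1}\beta_z=\sum_{|\lambda_i|=1}\alpha_i$ so that the final identity is indeed about the original coefficients, but this is a bookkeeping point rather than a gap.
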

	Recall the definition of the weighted Euler-Poincar\'{e} characteristic.
	\begin{definition} Let $w\in \BR$ and $q \in \BN$ a prime power. Consider a $\BC[T]$-module $V$ which is finite-dimensional over $\BC$. Define $V_w \subset V$ to be the sum of all generalized eigenspaces of $T$ acting on $V$ with respect to eigenvalues $\lambda$ whose absolute value satisfies $|\lambda| = q^{w/2}.$
		
		Let $M$ be a bounded complex of $\BC[T]$-modules, whose cohomology groups $H^i(M)$ are finite-dimensional over $\BC$. Define the \textit{weighted Euler-Poincar\'{e} characteristic of $M$ with weight $w$} to be (see \citep[p.~92]{KatzSommeExpon})
		\[
		\chi_w(M) := \sum_{i \in \BZ}(-1)^i \text{dim}_{\BC}(H^i(M)_w).
		\]
	\end{definition}
	\begin{theorem}\label{THM_MomentEulerPoincare}
		Let $k$ be a finite field of characteristic coprime to $2\ell$, $K$ a representing complex for the function $(\psi, k) \mapsto f(\psi, k, 1)$, $a \in k^*$, and $\psi$ a non-trivial additive character of $k$. Then we have
		\[
		M_4(\SH_{\psi, k, a}) = \chi_{0}(H^\bullet_c(\BA^1_{\overline{k}}, K\otimes\SL_{\psi})).
		\]
	\end{theorem}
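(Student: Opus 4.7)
The plan is to combine Theorem~\ref{THM_EquiForMom}, which realizes $M_4(\SH_{\psi, k, a})$ as the limit of $f(\psi_n, k_n, a)$, with the representing property of $K$ (which rewrites $f(\psi, k, 1)$ as a cohomological trace) and Proposition~\ref{PROP_SumOfPowers} (which collapses such a limit onto the weight-zero Frobenius eigenvalues). The argument splits into the base case $a = 1$, handled directly, and a reduction of the general case to it via Lemma~\ref{LEM_ChangeOfPsi} and Corollary~\ref{COR_IndOfBaseField}.

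For the base case $a = 1$, Theorem~\ref{THM_EquiForMom} gives $M_4(\SH_{\psi, k, 1}) = \lim_{n \to \infty} f(\psi_n, k_n, 1)$, and the representing property of $K$ rewrites $f(\psi_n, k_n, 1) = \text{Tr}(\text{Fr}_{k_n} | H^\bullet_c(\BA^1_{\overline{k}}, K \otimes \SL_{\psi_n}))$. Since on $\BA^1_{\overline{k}}$ the sheaves $\SL_{\psi_n}$ and $\SL_\psi$ coincide as geometric objects and $\text{Fr}_{k_n} = \text{Fr}_k^n$, this trace equals $\text{Tr}(\text{Fr}_k^n | H^\bullet_c(\BA^1_{\overline{k}}, K \otimes \SL_\psi)) = \sum_i (-1)^i \sum_j \lambda_{i, j}^n$, where $\lambda_{i, j}$ runs over the Frobenius eigenvalues on $H^i_c(\BA^1_{\overline{k}}, K \otimes \SL_\psi)$. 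Proposition~\ref{PROP_SumOfPowers} then identifies the limit with $\sum_i (-1)^i \#\{j : |\lambda_{i, j}| = 1\}$, which is exactly $\chi_0(H^\bullet_c(\BA^1_{\overline{k}}, K \otimes \SL_\psi))$.

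For general $a \in k^*$, Corollary~\ref{COR_IndOfBaseField} lets me pass to an extension $k_n$ in which $a = c^6$ for some $c \in k_n^*$, and Lemma~\ref{LEM_ChangeOfPsi} applied in $k_n$ then gives $M_4(\SH_{\psi, k, a}) = M_4(\SH_{\psi_n, k_n, a}) = M_4(\SH_{(\psi_n)_c, k_n, 1})$. The base case applied over $k_n$ with character $(\psi_n)_c$ identifies this with $\chi_0(H^\bullet_c(\BA^1_{\overline{k}}, K \otimes \SL_{(\psi_n)_c}))$, and one then matches this with $\chi_0(H^\bullet_c(\BA^1_{\overline{k}}, K \otimes \SL_\psi))$ to conclude.

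The main obstacle is the weight bookkeeping required by Proposition~\ref{PROP_SumOfPowers}: the limit only converges to $\chi_0$ when $|\lambda_{i, j}| \leq 1$ for every Frobenius eigenvalue. This is ensured by $K$ being mixed of weight $\leq 0$---as $R\varphi_!$ of the pure local system $\SL$ it inherits the weight bound by Deligne's theorem, and both $R\varphi_!$ and $H^\bullet_c$ preserve ``mixed of weight $\leq 0$,'' so $K \otimes \SL_\psi$ and its compactly supported cohomology inherit this bound. The subtler point is the final matching in the general-$a$ step of the two $\chi_0$ values under the multiplicative twist of the additive character, which ultimately hinges on the uniformity properties of the Fourier transform alluded to in the introduction.
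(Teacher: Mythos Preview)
Your core argument for $a=1$ is exactly the paper's proof: combine Theorem~\ref{THM_EquiForMom} with the representing property of $K$ to get $M_4(\SH_{\psi,k,1})=\lim_{n}\textnormal{Tr}(\textnormal{Fr}_k^n\mid H^\bullet_c(\BA^1_{\overline{k}}, K\otimes\SL_\psi))$, then apply Proposition~\ref{PROP_SumOfPowers}.

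Your worry about weight bookkeeping is unnecessary. Proposition~\ref{PROP_SumOfPowers} is stated for \emph{arbitrary} $\lambda_i\in\BC$: it asserts that if $\lim_N\sum_i\alpha_i\lambda_i^N$ exists, then it equals $\sum_{|\lambda_i|=1}\alpha_i$, with no hypothesis $|\lambda_i|\le 1$. Here the limit is already known to exist, because Theorem~\ref{THM_EquiForMom} identifies it with the integer $M_4(\SH_{\psi,k,1})$. So you do not need to argue that $K$ or $K\otimes\SL_\psi$ is mixed of weight $\le 0$; the proposition (via the argument in Section~\ref{SEC_Lemma}) forces any eigenvalues of modulus $>1$ to cancel in the alternating sum automatically. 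The paper's proof simply invokes the proposition without any weight discussion.

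On general $a$: the paper's written proof just asserts the displayed identity for all $a$, but since the representing property of $K$ only yields $f(\psi_n,k_n,1)$, the argument as written literally establishes only $a=1$. Your reduction of general $a$ to $a=1$ via Corollary~\ref{COR_IndOfBaseField} and Lemma~\ref{LEM_ChangeOfPsi}, followed by matching the two weighted Euler--Poincar\'e characteristics, is precisely what the paper does---but not inside this proof, rather in the proof of Theorem~\ref{THM_MainResult}, \emph{after} Theorem~\ref{THM_KatzUniformity} is available (and then only for large characteristic). In practice the paper only ever applies Theorem~\ref{THM_MomentEulerPoincare} at $a=1$, so your ``subtler point'' is not a defect of your argument but a forward reference that the paper also defers.
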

	\begin{proof}
		Theorem \ref{THM_EquiForMom} and the definition of representability imply
		\begin{equation*}
			M_4(\SH_{\psi, k, a}) = \lim_{n \rightarrow \infty} \textnormal{Tr}(\textnormal{Fr}_k^n|H^\bullet_c(\BA^1_{\overline{k}}, K\otimes\SL_{\psi})).
		\end{equation*}
		For all $i \in \BZ$, let $\lambda_{i, j} \in \BC$ where $1 \leq j \leq d_i$ be the eigenvalues of Frobenius acting on the cohomology group $H^{i}_c(\BA^1_{\overline{k}}, K\otimes\SL_{\psi})$. We have $$\textnormal{Tr}(\textnormal{Fr}_k^n|H^{\bullet}_c(\BA^1_{\overline{k}}, K\otimes\SL_{\psi})) = \sum_{i \in \BZ}\sum_{j = 1}^{d_j} (-1)^i\lambda_{i, j}^n,$$ so Proposition \ref{PROP_SumOfPowers} implies  
		\[
		M_4(\SH_{\psi, k, a})  = \sum_{|\lambda_{i, j}| = 1} (-1)^i = \chi_{0}\big(H^\bullet_c(\BA^1_{\overline{k}}, K\otimes\SL_{\psi})\big).
		\]
	\end{proof}
	\vspace{-8mm}
	\section{Determination of the Tannakian monodromy group} We recall a theorem on the uniformity of the Fourier transform by Katz. For an exposition of uniformity results for the Fourier transform, the reader is referred to \citep[Ch.~V]{KiehlWeissPervSheafFourier}. 
	\begin{theorem}[\hspace*{-0mm}{\citep[p.~92, Cor.~1]{KatzSommeExpon}}] \label{THM_KatzUniformity}
		Consider a ring $R \subset \BC$ that is finitely generated over $\BZ$, a constructible, mixed complex of $\Qbarl$-sheaves $K$ on $\BA^1_R$ and an integer $w \in \BZ$. Then there exists a non-zero $r \in R$ such that for all ring morphisms $R[1/r\ell] \rightarrow k$ into a finite field $k$ and all non-trivial additive characters $\psi$ of $k$, the integer
		\[
		\chi_{w}(H^{\bullet}_{c}(\mathbb{A}_{\overline{k}}^1, K\otimes\SL_{\psi}{})),
		\]
		where the restriction is taken along the map $\BA^1_k \rightarrow \BA^1_R$ induced by the ring morphism, is independent of the ring morphism and the character.
	\end{theorem}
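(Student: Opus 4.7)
The plan is to realize the cohomology $H^\bullet_c(\BA^1_{\overline{k}}, K \otimes \SL_\psi)$ as the stalk at a variable point of a single constructible complex on $\BA^1$, namely a Fourier transform of $K$, and then to combine generic constructibility with Laumon's description of the Fourier transform to deduce the desired uniformity.

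For each prime $p$ coprime to $\ell$, fix once and for all a non-trivial additive character $\psi_{0,p}$ of $\BF_p$. Every non-trivial additive character $\psi$ of a finite extension $k/\BF_p$ factors as $\psi(x) = \psi_{0,p}(\text{Tr}_{k/\BF_p}(tx))$ for a unique $t \in k^*$, and proper base change together with the projection formula identify $H^\bullet_c(\BA^1_{\overline{k}}, K \otimes \SL_\psi)$ with the stalk at $t$ of the Fourier transform $\text{FT}_{\psi_{0,p}}(K|_{\BA^1_k})$, up to a standard Tate twist and cohomological shift whose effect on $\chi_w$ is a mere reindexing. The theorem therefore reduces to a uniform constancy statement for the weighted Euler characteristic of the stalks of this Fourier transform over $\BG_m(k)$ as both $k$ and $t$ vary.

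After inverting a finite set of primes to obtain $R_1 := R[1/r_1\ell]$, the complex $K$ extends to a mixed constructible complex $\mathcal{K}$ on $\BA^1_{R_1}$, and Laumon's relative Fourier transform produces a constructible complex $\mathcal{F}$ on $\BA^1_{R_1}$ whose formation commutes with base change to fibers (after possibly inverting one further element to get $R_2$). By Deligne's theorem on the preservation of mixedness under the six operations, each cohomology sheaf $\mathcal{H}^i(\mathcal{F})$ is mixed and its weight-graded pieces are constructible sheaves compatible with pullback to fibers. The weighted Euler characteristic of the stalk,
\[
\chi_w(\mathcal{F}_t) = \sum_i (-1)^i \dim \big(\mathcal{H}^i(\mathcal{F})_t\big)_w,
\]
is therefore a constructible function of $t$ on $\BA^1_{R_2}$. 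To upgrade this to a constant function on $\BG_{m, R_2}$, one invokes Laumon's stationary-phase decomposition: the local structure of $\mathcal{F}$ at a point $t \in \BG_m$ is governed by the local Fourier transforms of $\mathcal{K}$ at $\infty$ and at the finite singularities of $\mathcal{K}$, and the associated generic ranks, drops, and Swan conductors of each weight-graded piece become simultaneously constant after inverting a further $r_3$. Taking $r := r_1 r_2 r_3$ then yields the desired element of $R$.

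The main obstacle I anticipate is precisely this final step. Generic constructibility alone would only give constancy on a Zariski-dense open subset of $\BA^1_{R_2}$, and extending it to cover every $t \in \BG_m(k)$ uniformly across all fibers demands Laumon's finer machinery. The crucial fact that makes this work is that $\SL_\psi$ contributes only a translation and a fixed local datum at $\infty$ independent of the non-trivial character $\psi$, so after spreading out the Swan slopes and rank data of $\mathcal{K}$ at $\infty$, the weight-graded local invariants of $\mathcal{F}$ stabilize uniformly on $\BG_m$. Verifying that finitely many localizations of $R$ suffice to trivialize all of the ramification-theoretic invariants that enter into $\chi_w$ is the technical heart of the argument.
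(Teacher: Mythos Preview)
The paper does not prove this theorem; it is quoted directly from Katz's \emph{Sommes exponentielles} (the citation in the heading) and invoked as a black box in the proof of Theorem~\ref{THM_MainResult}. There is therefore no in-paper argument to compare your proposal against.

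Your sketch is a plausible reconstruction of such a uniformity result, and you correctly isolate the genuine difficulty: constructibility alone yields constancy of the stalk invariants of the Fourier transform only on a dense open of $\BG_m$, whereas the statement demands constancy at \emph{every} closed point over every residue field. One remark on your proposed fix via stationary phase: it can be bypassed. Since all non-trivial $\SL_\psi$ share the same break decomposition at $\infty$, the \emph{unweighted} Euler characteristic $\chi\big(H^\bullet_c(\BA^1_{\overline k},M\otimes\SL_\psi)\big)$ is already independent of $\psi$ by the Grothendieck--Ogg--Shafarevich formula, and spreading this over $\text{Spec}(R)$ needs only Deligne--Laumon constancy of Swan conductors in families. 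The passage from unweighted to weighted Euler characteristics is then handled by filtering $K$ through the perverse weight filtration and using that Fourier transform preserves purity, so that each pure graded piece contributes to a single $\chi_w$. This avoids the pointwise local-Fourier analysis you flag as the technical heart.
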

	We have collected all the required results to prove the main theorem.
	\begin{theorem}\label{THM_MainResult}
		There exists a constant $C > 2$ such that for all primes $p >  C$, all finite extensions $k/\BF{p}$, all non-trivial additive characters $\psi$ of $k$, and all $a \in k^*$ the perverse sheaf $\SH_{\psi, k, a}$ has Tannakian monodromy group $G_2$.
	\end{theorem}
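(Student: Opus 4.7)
The plan is to show $M_4(\SH_{\psi, k, a})$ is a single constant $N$ on the range $\{(\psi, k, a) : \mathrm{char}(k) \geq C\}$, then identify $N = 4$ and invoke Theorem \ref{THM_Katz}(4). To begin, I would fix the representing complex $K$ on $\BA^1_U$ furnished by Theorem \ref{THM_TraceFormulaForf} and feed it into Theorem \ref{THM_KatzUniformity} at weight $w = 0$. This produces a threshold $C_0 > 2$ and an integer $N$ with
\[
\chi_0(H^\bullet_c(\BA^1_{\overline{k}}, K \otimes \SL_\psi)) = N
\]
for every finite field $k$ of characteristic $\geq C_0$ and every non-trivial additive character $\psi$ of $k$. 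Combining this with Theorem \ref{THM_MomentEulerPoincare} applied at base point $a = 1$ yields $M_4(\SH_{\psi, k, 1}) = N$ uniformly for all such $(\psi, k)$.

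The next step is to promote the constancy to arbitrary base points. Set $C := \max(C_0, 7)$ and write $q = |k|$. A brief case check in the two residue classes $q \equiv 1$ and $q \equiv 5 \pmod 6$ shows that $6$ divides the integer $(q^6 - 1)/(q - 1)$, which forces the natural map $k^* \to k_6^*/(k_6^*)^6$ to be zero; hence every $a \in k^*$ admits a sixth root $b \in k_6^*$. For such a decomposition $a = b^6$, Corollary \ref{COR_IndOfBaseField} followed by Lemma \ref{LEM_ChangeOfPsi} in $k_6$ (with multiplier $b$) gives
\[
M_4(\SH_{\psi, k, a}) = M_4(\SH_{\psi_6, k_6, b^6}) = M_4(\SH_{(\psi_6)_b, k_6, 1}) = N,
\]
the last equality invoking the first step applied to the non-trivial additive character $(\psi_6)_b$ of $k_6$.

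It remains to identify $N$, which I would do by locating a $G_2$-fibre in the family. Theorem \ref{THM_Katz}(3) supplies some $n \geq 1$ and $a \in k_n^*$ for which the Tannakian monodromy of $\SH_{\psi_n, k_n, a}$ is $G_2$; Theorem \ref{THM_Katz}(4) then gives $M_4(\SH_{\psi_n, k_n, a}) = 4$. Since $\mathrm{char}(k_n) = \mathrm{char}(k) \geq C$, the constancy from the preceding step forces $N = 4$, whence $M_4(\SH_{\psi, k, a}) = 4$ on the whole range, and a final application of Theorem \ref{THM_Katz}(4) completes the proof.

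The substantive input is Theorem \ref{THM_KatzUniformity} together with the formula of Theorem \ref{THM_MomentEulerPoincare}, which together channel the uniformity of the Fourier transform into uniformity of the fourth moment; everything else is assembly, with the only subtle point being the elementary sixth-power arithmetic in $k_6$. I do not anticipate a real obstacle, and the non-effectivity of the constant $C$ is inherited directly from Theorem \ref{THM_KatzUniformity}.
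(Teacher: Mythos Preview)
Your proof is correct and follows essentially the same approach as the paper: combine Theorem~\ref{THM_MomentEulerPoincare} with Theorem~\ref{THM_KatzUniformity} to see that $M_4(\SH_{\psi,k,1})$ is a constant $N$ for large characteristic, use Corollary~\ref{COR_IndOfBaseField} and Lemma~\ref{LEM_ChangeOfPsi} after adjoining a sixth root of $a$ to reduce arbitrary base points to $a=1$, and then pin down $N=4$ via Theorem~\ref{THM_Katz}(3)--(4). The only cosmetic differences are that the paper adjoins $a^{1/6}$ and $b^{1/6}$ simultaneously (comparing the two moments directly in a common extension) rather than first establishing constancy and then identifying the constant, and that it bypasses your explicit divisibility check for $(q^6-1)/(q-1)$ by simply working in $k_n(a^{1/6},b^{1/6})$.
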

	\begin{proof}
		Let $K$ be a representing complex for the function $(\psi, k) \mapsto f(\psi, k, 1)$. Remark that Theorem \ref{THM_KatzUniformity} implies that there is a constant $C > 2$ such that for each prime $p > C$, each finite extension $k/\BF{p}$, and each non-trivial additive character $\psi$ of $k$ the integer
		\begin{equation*}
			\chi_{0}(H^{\bullet}_{c}(\mathbb{A}_{\overline{k}}^1, K\otimes\SL_{\psi}{}))
		\end{equation*}
		does neither depend on $\psi$ nor on $k$. 
		
		Let $p > C$ be a prime, $k/\BF{p}$ a finite extension, $\psi$ a non-trivial additive character, and $a \in k^*$. There exists a finite extension $k_n$ of $k$ and an element $b \in k_n$ such that  $M_4(\SH_{\psi_n, k_n, b}) = 4$ by Theorem \ref{THM_Katz}. Define the field $k_m := k_n(a^{1/6}, b^{1/6})$ and the non-trivial additive character $\psi'(x) := \psi_m(a^{1/6}x)$ of $k_m.$ Corollary \ref{COR_IndOfBaseField}, Lemma \ref{LEM_ChangeOfPsi} and Theorem \ref{THM_MomentEulerPoincare} imply
		\[
		M_4(\SH_{\psi, k, a}) = M_4(\SH_{{\psi'}, k_m, 1}) = \chi_0(H^\bullet_c(\BA_{\overline{k}}^1, K\otimes\SL_{\psi'})).
		\]
		Define the additive character $\psi''(x) := \psi_m(b^{1/6}x)$ of $k_m$. Corollary \ref{COR_IndOfBaseField}, Lemma \ref{LEM_ChangeOfPsi} and Theorem \ref{THM_MomentEulerPoincare} imply
		\[
		4 = M_4(\SH_{\psi_m, k_m, b}) = M_4(\SH_{\psi'', k_m, 1}) =  \chi_0(H^\bullet_c(\BA_{\overline{k}}^1, K\otimes\SL_{\psi''})).
		\]
		The uniformity result says 
		\[
		\chi_0(H^\bullet_c(\BA_{\overline{k}}^1, K\otimes\SL_{\psi''})) = \chi_0(H^\bullet_c(\BA_{\overline{k}}^1, K\otimes\SL_{\psi'})),
		\] so we can chain all the equations to get $$M_4(\SH_{\psi, k, a}) = 4.$$ Hence Theorem \ref{THM_Katz} implies that the Tannakian monodromy group of $\SH_{\psi, k, a}$ is $G_2.$
		
	\end{proof}
	\section{Sums of powers}\label{SEC_Lemma} In this section, we prove the remaining Proposition \ref{PROP_SumOfPowers}. This proposition and the following lemmas are well-known but we prove them here because we were not able to find a reference. Define $$\BT := \{z \in \BC : |z| = 1\}.$$
	
	\begin{lemma}\label{LEM_SumOfPowersPrep}
		Let $x \in \BT^n$ and define $A \subset \BT^n$ to be the closure of the subgroup generated by $x.$ A continuous function $f\colon A \rightarrow \BC$ such that
		\[
		\lim_{N \rightarrow \infty} f(x^N)
		\]
		exists is constant.
	\end{lemma}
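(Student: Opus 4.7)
The plan is to reduce the lemma to a density statement with unbounded indices: for every $y \in A$ there exist positive integers $N_k \to \infty$ with $x^{N_k} \to y$. Once this is granted, if $L := \lim_{m \to \infty} f(x^m)$ exists, continuity of $f$ gives $f(y) = \lim_k f(x^{N_k}) = L$ for every $y \in A$, so $f$ is identically equal to $L$ and hence constant.

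First I would establish the auxiliary fact that for every $\delta > 0$ there are arbitrarily large positive integers $M$ with $d(x^M, 1) < \delta$, where $d$ is any translation-invariant metric on the compact group $\BT^n$. Compactness of $A$ provides a convergent subsequence $x^{M_k} \to z$; translation invariance then gives $x^{M_{k+1} - M_k} \to 1$, and after passing to a further subsequence one can additionally arrange $M_{k+1} - M_k \to \infty$.

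Next I would show that the forward orbit closure $A^+ := \overline{\{x^m : m \geq 1\}}$ coincides with $A$. By construction $A^+$ contains $x$ and is closed under multiplication; the previous step puts the identity into $A^+$, and the approximation $x^{M-1} \approx x^{-1}$, valid whenever $x^M \approx 1$ and $M \geq 2$, puts $x^{-1}$ into $A^+$ as well. Hence $A^+$ is a closed subgroup of $\BT^n$ containing $x$, and minimality of $A$ forces $A^+ \supseteq A$; the reverse inclusion is trivial.

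Combining these ingredients yields the unbounded density claim. Given $y \in A$, $\varepsilon > 0$ and a threshold $N_0$, first use $A^+ = A$ to pick $m_0 \geq 1$ with $d(x^{m_0}, y) < \varepsilon/2$; then use the auxiliary fact to pick $M \geq N_0$ with $d(x^M, 1) < \varepsilon/2$. Translation invariance gives $d(x^{m_0 + M}, y) < \varepsilon$ with $m_0 + M \geq N_0$, and diagonalizing over $\varepsilon \to 0$ produces the required sequence $N_k \to \infty$ with $x^{N_k} \to y$. The main subtlety, and the sole place where compactness is essentially exploited, is precisely this unbounded-indices clause: mere density of $\{x^m : m \geq 1\}$ in $A$ would not suffice, because the indices witnessing approximation of a particular $y$ could a priori remain bounded, decoupling $f(y)$ from the tail behaviour of the sequence $f(x^m)$.
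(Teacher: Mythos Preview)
Your argument is correct. The paper, by contrast, does not give a direct proof at all: it simply invokes the Kronecker--Weyl equidistribution theorem. That theorem yields more than is needed here---it says the forward orbit $\{x^m\}_{m\ge 1}$ equidistributes in $A$ for Haar measure---but equidistribution immediately supplies the unbounded-density statement you isolate, since any nonempty open subset of $A$ has positive Haar measure and is therefore visited by infinitely many orbit points. Your route is genuinely different and more elementary: you extract the unbounded-density claim from compactness and translation-invariance alone, bypassing Weyl's criterion and any Fourier analysis. The trade-off is that Kronecker--Weyl is a one-line citation, whereas your argument is self-contained; for the purposes of this lemma, your topological approach is arguably cleaner, since only recurrence (not equidistribution) is actually used downstream.
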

	This is a consequence of the Kronecker--Weyl theorem, see for example \citep[Thm.~B.6.5~(1)]{KowalskiProbNumberTheory}. The Kronecker--Weyl theorem implies that the set $\{x^n : n \geq N\}$ is everywhere dense in $A$ for any $N \geq 0$ (since the indicator function of a non-empty open subset can never have measure zero with respect to a Haar measure), thereby proving Lemma \ref{LEM_SumOfPowersPrep}.
		
	\begin{lemma}\label{LEM_SumsOfPowersPrep2}
		Define the functions $\phi_z \colon \BN \rightarrow \BC$
		\[
		\phi_z(n) = z^n
		\]
		for each $z \in \BC.$ These functions are linearly independent in $\BC^\BN$.
	\end{lemma}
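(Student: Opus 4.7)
The plan is to unwind the definition of linear independence and reduce to a Vandermonde argument. By definition, we must show that for every finite collection of pairwise distinct complex numbers $z_1, \ldots, z_k$ and every tuple of scalars $c_1, \ldots, c_k \in \BC$, the relation
\[
\sum_{i=1}^{k} c_i \phi_{z_i} = 0 \quad \text{in } \BC^{\BN}
\]
forces $c_1 = \cdots = c_k = 0$. Spelling this out at each $n \in \BN$ gives the system $\sum_{i=1}^k c_i z_i^n = 0$ for all $n$, and the key reduction is that we only need to specialise this to $k$ values of $n$ in order to use classical linear algebra.

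Concretely, I would specialise to $n = 0, 1, \ldots, k-1$, which rewrites the hypothesis as the matrix equation $V \vec{c} = 0$, where $V$ is the $k \times k$ matrix with entries $V_{n,i} = z_i^n$. This is (the transpose of) a Vandermonde matrix, and its determinant is
\[
\det V = \prod_{1 \leq i < j \leq k} (z_j - z_i),
\]
which is nonzero precisely because the $z_i$ were chosen to be distinct. Hence $V$ is invertible, $\vec{c} = 0$, and the claimed linear independence follows.

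There is essentially no obstacle here: once one writes down the Vandermonde matrix, everything is mechanical. The only convention-level point worth flagging is the treatment of $z = 0$, where we read $\phi_0(0) = 0^0 = 1$ and $\phi_0(n) = 0$ for $n \geq 1$; this causes no trouble in the argument above, since the Vandermonde determinant formula remains valid when one of the $z_i$ equals $0$ (the corresponding column of $V$ is simply the first standard basis vector, and distinctness of the other $z_j$ from $0$ is already encoded in the product $\prod_{i<j}(z_j - z_i)$).
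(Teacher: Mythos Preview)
Your proof is correct and follows exactly the same Vandermonde argument as the paper, which simply observes that the matrix $(z^m)_{z\in S,\,0\le m\le |S|-1}$ is invertible for any finite $S\subset\BC$. Your write-up merely spells out the details (including the harmless $z=0$ case) that the paper leaves implicit.
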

	\begin{proof}
		Suppose
		\[
		\lambda_0\phi_{z_0} + \ldots + \lambda_m\phi_{z_m} = 0
		\]
		for pairwise distinct $z_i \in \BC$ and arbitrary $\lambda_i \in \BC$. 
		Note that the Vandermonde matrix
		\[
		\begin{pmatrix}
			1 & 1  &\cdots & 1 \\
			z_0 & z_1 & \cdots & z_m \\
			\vdots & \vdots & \vdots & \vdots \\
			z_0^m & z_1^m & \cdots & z_m^m.
		\end{pmatrix}
		\]
		is invertible because of the well-known formula for the determinant. The linear relation implies
		\[
		\sum_{i = 0}^m \lambda_i
		\begin{pmatrix}
			1 \\ z_i \\ \vdots \\ z_i^m
		\end{pmatrix} = 0.
		\]
		The invertibility of the Vandermonde matrix implies $\lambda_i = 0$ which implies that the $\phi_z$ are linearly independent.
	\end{proof}
	We prove Proposition \ref{PROP_SumOfPowers}. Suppose $|\lambda_i| \leq 1.$ We have
	\[
	\lim_{N \rightarrow \infty} \sum_i \alpha_i\lambda_i^N = \lim_{N \rightarrow \infty} \sum_{\substack{|\lambda_i| = 1}} \alpha_i\lambda_i^N
	\]
	because the terms with $|\lambda_i| < 1$ converge to zero as $N \rightarrow \infty.$ Let $\mathbf{\lambda} \in \BT^n$ be the tuple of all $\lambda_i$ with $|\lambda_i| = 1$ and consider the subgroup $A$ generated by $\mathbf{\lambda}$. The function
	\[
	f\colon (\gamma_i)_i \in A \mapsto \sum_{|\lambda_i| = 1} \alpha_i\gamma_i
	\]
	is continuous. The existence of the limit $\lim_{N \rightarrow \infty} f(\mathbf{\gamma}^N)$ implies that the function is constant by Lemma \ref{LEM_SumOfPowersPrep}. Hence
	\[
	\lim_{N \rightarrow \infty} \sum_{\substack{|\lambda_i| = 1}} \alpha_i\lambda_i^N = \lim_{N \rightarrow \infty} f(\mathbf{\gamma}^N) = \lim_{N \rightarrow \infty} f(\mathbf{\gamma}^0) = f(\mathbf{\gamma}^0) = \sum_{\substack{|\lambda_i| = 1}} \alpha_i\lambda_i^0 = \sum_{|\lambda_i| = 1} \alpha_i.
	\]
	
	Consider arbitrary $\lambda_i \in \BC$ and define 
	\[
	\beta_z := \sum_{\lambda_i = z} \alpha_i
	\]
	for each $z \in \BC$. We can write
	\[
	\sum_i \alpha_i \lambda_i^N = \sum_{z \in \BC} \beta_z z^N 
	\]
	for each $N \geq 0.$ Let $M := \max(\{|z| : \beta_z \neq 0\}\cup\{1\})$ and suppose $M > 1.$ Then
	\[
	\lim_{N \rightarrow \infty} \sum_{z \in \BC} \beta_z (z/M)^N = \Big(\lim_{N \rightarrow \infty} \sum_{z \in \BC} \beta_z z^N\Big)\cdot\Big(\lim_{N \rightarrow \infty} M^{-N}\Big) = 0.
	\]
	Thus
	\[
	0 = \lim_{N \rightarrow \infty} \sum_{|z| = M} \beta_z (z/M)^N.
	\]
	The powers all have absolute value $\leq 1$, by definition of $M$, so the above argument implies
	\[
	0 = \sum_{|z| = M} \beta_z (z/M)^N
	\]
	for all $N \geq 0$. Lemma \ref{LEM_SumsOfPowersPrep2} implies $\beta_z = 0$ for all $z \in \BC$ with $|z| = M$. This is a contradiction to the definition of $M$, and therefore $M \leq 1.$ Thus we are in the special case treated above.
	\hfill \qed
	\section*{Funding} \noindent This work was supported by the SNF grant 219220.
	\section*{Acknowledgments} \noindent The author would like to thank Prof. Dr. Emmanuel Kowalski for suggesting this interesting problem and his continued interest and support. The author expresses deep gratitude towards the referee for their many suggestions. They have improved the article substantially.

\end{document}